\theoremstyle{plain}
\newtheorem{theorem}{Theorem}
\newtheorem{lemma}{Lemma}
\theoremstyle{definition}
\newtheorem{definition}{Definition}
\newtheorem{example}{Example}
\newtheorem{remark}{Remark}
\newtheorem{assumption}{Assumption}
\numberwithin{theorem}{section}
\numberwithin{corollary}{section}
\numberwithin{lemma}{section}
\numberwithin{definition}{section}
\numberwithin{example}{section}
\numberwithin{remark}{section}
\numberwithin{proposition}{section}
\numberwithin{assumption}{section}
\newcommand{\Nat}{\mathbb{N}}
\newcommand{\E}{\mathbb{E}}
\newcommand{\ee}{\mathfrak{e}}
\newcommand{\PP}{\mathbb{P}}
\newcommand{\myU}{\widetilde{U}}
\newcommand{\M}{\mathcal{M}}
\newcommand{\Cov}{\mathop{\mathrm{Cov}}\nolimits}
\newcommand{\Var}{\mathop{\mathrm{Var}}\nolimits}
\newcommand{\1}{\mathds 1}
\newcommand{\ffi}{\varphi}
\newcommand{\eps}{\varepsilon}
\newcommand{\myX}{\mathcal{X}}
\newcommand{\MyX}{\widetilde{X}}
\newcommand{\MyV}{\widetilde{V}}
\newcommand{\MyZ}{\widetilde{Z}}
\newcommand{\myA}{\mathcal{A}}
\newcommand{\BB}{\mathcal{B}}
\newcommand{\GG}{\mathcal{G}}
\newcommand{\FF}{\mathcal{F}}
\newcommand{\pp}{\mathcal{P}}
\newcommand{\const}{\text{\rm const}}
\newcommand{\cR}{{\mathbb R}}
\newcommand{\pd}{\partial}
\title{Limit Theorems for the Dynamical Foundation of the Fractional Brownian Motion and Related Models of Anomalous Diffusion
with Random Diffusion Coefficient and 
Time-Dependent Random Hurst parameter}
\author{
Christian Bender\footnote{Saarland University, Department of Mathematics, Campus E2 4, 66123 Saarbr\"ucken, Germany, \texttt{bender@math.uni-saarland.de}}, 
Yana A. Butko\footnote{Kassel University, Institute of Mathematics, Heinrich-Plett-Str. 40, 34132 Kassel, Germany, \texttt{kinderknecht@mathematik.uni-kassel.de}}, 
Mirko D'Ovidio\footnote{
 Sapienza University of Rome,  Department of Basic and Applied Sciences for Engineering,  Antonio Scarpa 16,  00161 Rome,  Italy, \texttt{mirko.dovidio@uniroma1.it}}, 
Gianni Pagnini\footnote{
BCAM--Basque Center for Applied Mathematics, Alameda de Mazarredo 14, 48009
Bilbao, Basque Country, Spain
\& Ikerbasque--Basque Foundation for Science, Plaza Euskadi 5, 48009 Bilbao, Basque Country, Spain, \texttt{gpagnini@bcamath.org}}
}
\begin{document}
\maketitle
\begin{abstract}
Anomalous diffusion is an established phenomenon but still a theoretical challenge in non-equilibrium statistical mechanics.
Physical models are built incrementally,  and
the most recent  and most general family is based on the fractional Brownian motion (fBm) with a random diffusion coefficient (superstatistical fBm) together with a time-dependent random Hurst parameter. We provide here a dynamical foundation for such general family of models.
We consider a dynamical system describing the motion of a test-particle surrounded by $N$ Brownian particles with different masses. This dynamic is governed by underdamped Langevin equations. 
Physical principles of conservation of momentum and energy are met. We prove that, in the limit $N\to\infty$, the test-particle diffuses in time according to a quite general (non-Markovian) Gaussian process whose covariance function is determined by the distribution of the masses of the surround-particles. In particular, with proper choices of the distribution of the masses of the surround-particles, we obtain fBm together with a number of other special cases of interest in modelling anomalous diffusion including time-dependent anomalous exponent. Furthermore, when the ensemble heterogeneity of the surround-particles embodying the environment becomes non-uniform and
joins with the individual inhomogeneity of the test-particles, we show that, in the limit $N\to\infty$, the test-particle diffuses in time according to a quite general conditionally Gaussian process that can be calibrated into a fBm with random diffusion coefficient and random time-dependent Hurst parameter. We conclude our study by reporting the generalised Kolmogorov--Fokker--Planck equations associated to these highly general processes.

    \textbf{Keywords:} 
    fractional Brownian motion, superstatistical fractional Brownian motion, generalized grey Brownian motion, randomly scaled Gaussian processes, random Hurst parameter, time-dependent random Hurst parameter,
        anomalous diffusion,
    ensemble heterogeneity of the surround-particles, individual inhomogeneity of the test-particles, crowded environment,  limit theorems for stochastic processes, generalized evolution equations with
    pseudo-differential operators.
    
\end{abstract}

\section{Introduction}
\textbf{Motivation and goal:}
Experimentally well-established~\cite{klafter_etal-pw-2005,barkai_etal-pt-2012,hofling_etal-rpp-2013,golding_etal-prl-2006}, anomalous diffusion (AD) is a phenomenon observed in many different natural systems belonging to different research fields~\cite{metzler_etal-jpa-2004,kgs,klm}. In particular, AD has become foundational in living systems after a large use of single-particle tracking techniques 
during recent years~\cite{manzo_etal-rpp-2015,shen_etal-cr-2017,simon_etal-nrmp-2024}.
Generally speaking, AD labels all those diffusive processes that are governed by laws that differ from that of classical heat diffusion, 
namely, all those cases when particles' displacements do not accommodate to the Gaussian density function  and/or the variance of such displacements does not grow linearly in time.

In the present paper, 
we establish the physical origin of AD within the picture of 
a test-particle kicked by infinitely many surrounding particles
composing a heterogeneous ensemble.  
In particular, we consider a stochastic dynamical system where 
the microscopic thermal bath is the forcing for the mesoscopic Brownian motion of a bunch of $N$ particles governed by the underdamped Langevin equation; these particles embody the environment surrounding a single test-particle.
Physical conservation principles, 
namely the conservation of momentum and the conservation of energy, 
are met in the considered particle-system in the form of
a coupling between the test-particle and the surround, and 
the fluctuation-dissipation theorem for the motion of the 
surround-particles~\cite{grassia-ajp-2001}, respectively.
The key feature of the present particle-system 
that allows for displaying AD is the extra-randomness that
is introduced both in the form of a distribution of the masses  
of the surround-particles 
- that we call {\it ensemble heterogeneity} -  
and in that of an inner variability among the test-particles 
- that we call {\it individual inhomogeneity}.

\smallskip
\textbf{ Surround-particle mass distribution and the emerging
of the fractional Brownian motion:}
As the first and the main step, 
we introduce the ensemble heterogeneity of the environment
via the distribution of the masses of the surround-particles. 
When the number of mesoscopic Brownian surround-particles $N$ is large enough 
for providing a crowded environment,
then the test-particle displays AD characterised 
by the distribution of the masses of the surround-particles. 
More precisely, we prove that, in the limit $N\to\infty$,
the test-particle diffuses according to a quite general (non-Markovian) Gaussian 
process $(Z_t)_{t \geq 0}$ characterised by a covariance function 
\begin{align}\label{eq:Covariance}
\text{Cov}(Z_t,Z_s) = {\mathcal{D} \,}(v(t)+v(s)-v(|t-s|)) \,,
\end{align}
where $v(\cdot)$ is determined by the distribution of 
the masses of the surround-particles, 
and the constant $\mathcal{D}$ depends on the strength of 
the coupling between the test-particle and the surround-particles. 
Formula~\eqref{eq:Covariance}  
includes the noteworthy case of AD when the variance of the limiting process is (at least asymptotically) a power function whose degree (which is called \emph{anomalous exponent}\footnote{In the case of fractional Brownian motion (fBm) with  \emph{Hurst parameter} $H$, the anomalous exponent is given by $2H$.   The notion \emph{Hurst parameter}  is often used in the literature for description of anomalous exponent also in more general fBm-like models.}) may even change with time. This feature, together with the extra randomnesses due to a random diffusion coefficient and a random Hurst parameter (added in the next steps  below), allows for setting the most general model for AD that in the present formulation is even based on a physically founded stochastic dynamics.

We present  certain choices of the mass distribution of the surround-particles, which lead to several particularly important  AD models.  Specifically, 
we obtain a fractional Brownian motion (fBm) 
with Hurst parameter $H\in(1/2,1)$ as a limiting process $(Z_t)_{t \geq 0}$ using a  suitable power-law distribution of the masses of the surround-particles. 
In this respect, 
we remind that the fBm has experimentally turned out to be 
the underlying stochastic motion in many living systems, 
see, 
inter alia~\cite{magdziarz_etal-prl-2009,szymanski_etal-prl-2009,weiss-pre-2013}.
As another  example, using  a  suitable mixture of  different power-law distributions of the masses of the surround-particles, 
we obtain a mixture of independent fBms with different Hurst parameters as a limiting process $(Z_t)_{t \geq 0}$. This leads to 
a fBm-like 
test-particle's trajectory 
that changes  its 
anomalous exponent with time similarly to recent fBm-based models with time-dependent Hurst exponent 
\cite{slezak_etal-jpa-2023,balcerek_etal-prl-2025}.   
Moreover, we present some distributions of the masses of the surround-particles 
leading to the limiting processes which perform 
a transition from ballistic diffusion to superdiffusion, 
or from ballistic diffusion to classical diffusion. More precisely, the variance functions of these processes can be approximated by power functions whose degrees depend on the time scale and, hence, change with time. Finally, we show how a classical Wiener process can be obtained within our approach.

The Brownian motion of the mesoscopic surround-particles of the system, considered at this step, is   described through 
the underdamped Langevin equation.  
Namely, the velocity is provided
by an Ornstein--Uhlenbeck process and the position is given 
by the integration in time of the velocity according to kinematics. 
Within this setting, the proof of our main limit theorem exploits that, 
conditionally on the masses of surround-particles, 
the dynamics of the test-particle is Gaussian, 
and we can make use of the theory of mixing convergence 
(see, e.g.,~\cite{zbMATH06444973}) to pass to a suitable scaling limit. 
The scaling is, however, worse than in the classical Central Limit Theorem, 
what is compensated by good properties of the Ornstein--Uhlenbeck process.  
The present result pushes forward in a rigorous way 
a preliminary analysis~\cite{dovidio_etal-fcaa-2018} 
aimed to derive  models for AD on the basis of an unspecified (conditionally) 
Gaussian process generated by the superposition of Ornstein--Uhlenbeck processes.

\smallskip
\textbf{ Random diffusion coefficient:}
As a next step, we take into account the individual inhomogeneity of the test-particles. This is reasonable in the case when our test-particle is a complex macromolecule and it may differ from its replicas because of its individual structure features, shape, hydrodynamic radius, etc., even though    
the mass of the test-particle is fixed.  
The individuality of the test-particle is introduced via making 
random the parameter of its coupling with the surrounding environment.  
With this additional assumption we show that, 
in the limit $N\to\infty$, 
the test-particle diffuses according to a process of the form  
$(\sqrt{A} Z_t)_{t\geq0}$, where  $(Z_t)_{t\geq0}$ is a Gaussian process  
with covariance structure as in formula~\eqref{eq:Covariance}. 
The random scaling $\sqrt{A}$ pops up due to the randomness 
of the coupling parameter and plays the role of a 
"random diffusion coefficient".
This result establishes the physical basis 
for the formulation of AD 
within the framework of randomly scaled Gaussian processes (RSGPs). 
In particular, this result justifies the models of
the superstatistical fBm~\cite{
mura_etal-jpa-2008,molina_etal-pre-2016,
mackala_etal-pre-2019,itto_etal-jrsi-2021}, 
where AD is described by a product of a fBm with a proper statistically independent non-negative random variable. The fBm as the Gaussian part provides a non-linear variance in time while the statistically independent non-negative random variable, 
i.e., the random scaling, 
is responsible for a population of diffusion coefficients 
and gives rise to the resulting non-Gaussianity from the realisations 
of the process. 
In this respect, 
we remind that the experimental evidences of a population of 
diffusion coefficients have been reported, for example, in the motion of
mRNA molecules in live E. coli cells \cite{mackala_etal-pre-2019},
$\beta$-adrenergic receptors \cite{grimes_etal-c-2023}
and dendritic cell-specific intercellular adhesion 
molecule 3-grabbing nonintegrin (DC-SIGN) \cite{manzo_etal-prx-2015}.

The superstatistical fBm models are being actively investigated 
in the framework of non-Gaussian analysis such as 
the Grey Noise Analysis by Schneider~\cite{schneider-1990,schneider-1992} 
and its generalisations including 
the Mittag--Leffler Analysis~\cite{grothaus_etal-jfa-2015,grothaus_etal-jfa-2016}
and the Gamma-Grey Analysis~\cite{beghin_etal-pa-2023}.  
The family of superstatistical fBm models includes 
the generalised grey Brownian motion (ggBm) 
\cite{mura-phd-2008,mura_etal-jpa-2008,mura_etal-pa-2008,mura_etal-itsf-2009}, 
gamma-grey Brownian motion
~\cite{beghin_etal-pa-2023}, 
and many other processes related to generalized time fractional 
evolution equations~\cite{bender_etal-fcaa-2022b,bender_etal-fcaa-2022a,
Yana-Merten,pagnini_etal-fcaa-2016,runfola_etal-pd-2024}. 
In particular, models where the random variable $A$
follows the distribution of the square of a 
Weibull distributed variable~\cite{mackala_etal-pre-2019}, 
the generalized Gamma distribution~\cite{sposini_etal-njp-2018} 
or the log-normal distribution~\cite{dossantos_etal-p-2020} 
have been considered in the literature. 
Thus, our results show that 
the superstatistical fBm~\cite{molina_etal-pre-2016,mackala_etal-pre-2019,
itto_etal-jrsi-2021,runfola_etal-rsos-2022,korabel_etal-e-2021,
korabel_etal-sr-2023}
together with its generalization called the diffusing-diffusivity approach~\cite{chubynsky_etal-prl-2014,chechkin_etal-prx-2017,wang_etal-njp-2020,
wang_etal-jpa-2020,sposini_etal-njp-2020,dossantos_etal-csf-2021} 
(here the diffusion coefficient of each test-particle is no longer 
a random variable but a process) stand as promising methods.

\smallskip
\textbf{ Random anomalous exponent:}
Finally, as a third step, 
we allow more randomness in the description of the heterogeneous surround. We introduce an additional random element $H$   and allow the  random masses of the surround-particles to have  a distribution that depends on $H$.
This construction permits to include the case of a random anomalous exponent  into the setting. Namely, the random element $H$ plays the role of the \emph{random Hurst parameter} in the case of fBm-like models.
This framework  is motivated by experimental observations displaying
a distribution of the anomalous exponent 
in the motion of 
histone-like nucleoid-structuring proteins \cite{wang2018}
and quantum dot dynamics in cell cytoplasm
\cite{sabri2020elucidating,janczura2021identifying,cherstvy2019non}.

According to our dynamical system,
the emerging of such behavior can be interpreted as follows. On one hand,
the surround-particles are assumed to be complex molecules 
with their own proper structure features, shape, hydrodynamic radius etc.
Hence, it might be not possible to prepare replicas  
of the environment containing fully identical copies of the surround-particles.
This kind of differences may be expressed through fluctuations of the
Hurst parameter which results to be distributed. 
On the other hand, 
the distribution of the Hurst parameter may express
 a non-uniform ensemble heterogeneity  of the environment such that 
different test-particles can experience  local differences of the surround.
For example, 
if the diffusion of the test-particles is observed for a rather short time, 
then each diffusing test-particle is affected by the environment around its starting position. 
Hence, we have that each test-particle has its own copy of the $N$ "local" 
surround-particles and, each copy of this surround, is characterized
by its own value $h$ of the random element $H$. 
More technically, for a given value $h$ of $H$,  
the masses of $N$ surround-particles are randomly distributed according 
to some distribution $\mu_N(h)$, 
which (in the same style as at the first step) 
leads to the variance $v_h(\cdot)$ of the Gaussian part in the limiting process. 
More precisely, 
having all sources of randomness described above in all three steps, 
we show that, in the limit $N\to\infty$, 
the test-particle diffuses according to a process of the form 
$(\sqrt{A} G^{(H)}_t)_{t\geq0}$. 
Here, for each given value $h$ of $H$, 
the process $(  G^{(h)}_t)_{t\geq0}$ is Gaussian with covariance structure 
as in formula~\eqref{eq:Covariance} 
(with $v_h$ instead of $v$ and with $\mathcal{D}$ depending on $h$).

As a special case, we obtain a randomly scaled (or, superstatistical) 
fBm with random Hurst parameter. Such 
general processes 
with both random diffusion coefficient and random Hurst parameter
have been considered in literature without providing a generation mechanism, 
e.g., \cite{itto_etal-jrsi-2021,lanoiselee_etal-prl-2025}.
Models with random Hurst parameter only have been also considered,
e.g., \cite{randomHurst,
woszczek_etal-c-2025}.
Note that, in the obtained special case,   the random scaling (i.e., the random diffusion coefficient)    turns out to depend both on  random coupling parameter and on random Hurst parameter. The joint distribution of the anomalous exponent and the diffusion coefficient has been experimentally observed, for example, for G proteins \cite{Sungkaworn2017}, intracellular quantum dots \cite{Etoc2018}, $\kappa$-opioid receptor~\cite{Drakopoulos2020},  membrane-less organelles in C. elegans embryos \cite{benelli2021sub}, nanobeads in clawfrog \textit{X. laevis} egg extract~\cite{speckner2021single}, and endosomal dynamics~\cite{korabel_etal-e-2021}. 
More recently, a method for 
deciphering the joint distribution of the anomalous exponent 
and the diffusion coefficient has been proposed together with its
experimental protocol \cite{lanoiselee_etal-prl-2025}.
Further, adapting other examples
of the limiting Gaussian processes of step one to the general setting with random  diffusion coefficient and random Hurst parameter, one may obtain   AD models with time-dependent random anomalous exponent (cf.~\cite{balcerek_etal-prl-2025}). Such models can be interpreted in the following way: Along its trajectory, the test-particle moves from one patch of the domain to another, where each patch is characterized by its own value $h$ of the random Hurst parameter $H$.

To conclude this introductory section,
we report that our study is completed by the derivation of the
Kolmogorov--Fokker--Planck equations associated to our processes,
including the most general case when both the anomalous exponent 
and the diffusion coefficient are random variables.
The derived equation results indeed in a
quite general evolution equation stated 
in terms of pseudo-differential operators. 
As special cases, the derived equation 
contains generalized fractional diffusion equations
that have been studied in literature, see, e.g.,
\cite{bender_etal-fcaa-2022b,
bender_etal-fcaa-2022a,
Yana-Merten,runfola_etal-pd-2024}.

The paper is organised a follows. In Section~\ref{sec:statement}, we state the problem and  present the main limit theorem (together with some special cases) corresponding to the case of Gaussian limiting process. Its detailed proof is reported in  Section~\ref{Proofs}. In Section~\ref{SuperstatFBM}, we generalize our main limit theorem by adding two additional sources of randomness as described in steps two and three above. We discuss  special cases of  a superstatistical fBm with random Hurst parameter and a randomly scaled fBm-like model with time-dependent random Hurst parameter. Moreover, we outline what kind of evolution equations serve as Kolmogorov--Fokker--Planck equations for such limiting processes. 

\section{Fractional Brownian Motion and Other Gaussian Processes as a Result of 
Ensemble Heterogeneity of the Surround-Particles}
\label{sec:statement}
In this section, we consider AD 
caused by the heterogeneity of the surround-particles that compose the environment where 
the diffusion of the test-particle 
takes place. Hence 
we consider the motion of a single test-particle with mass $M$  
that is immersed into a surround composed by a heterogeneous ensemble of 
{$N$} 
Brownian particles with positive masses $m_{k,N}$, $k=1,\ldots,N$, 
positions $Y^{k,N}_t$ and velocities $U^{k,N}_t$, 
{$N\in\Nat$}. 
Let $X^N_t$ be the position and $V^N_t$ be the velocity of the test-particle. 
This system of particles is described 
by the following system of Langevin equations:
\begin{align}\label{original system}
\left\{ \begin{array}{ll}
dX^N_t=V^N_tdt \,, & X^N_0={ 0} \,,\\
dV^N_t=\frac1M
\sum_{k=1}^N\left(\beta_{k,N}U^{k,N}_tdt-\alpha_{k,N}V^N_tdt \right) \,, 
& V^N_0={ 0} \,,\\
dY^{k,N}_t=U^{k,N}_tdt \,, & Y^{k,N}_0=y^{k,N}_0 \,,\\
{dU^{k,N}_t=
F(U^{k,N}_t,V^N_t,
m_{k,N},\alpha_{k,N},\beta_0^{k,N},\beta_{k,N}) dt
+\frac{\sqrt{2\sigma_0^{k,N}}}{m_{k,N}}dW^k_t} \,, & U^{k,N}_0=u^{k,N}_0 \,,\\
& k=1,\ldots,N \,,
\end{array} \right.
\end{align}
where $(W^1_t)_{t\geq0},\ldots, (W^n_t)_{t\geq0},\ldots$ 
is a sequence of independent Wiener processes 
on some probability space $(\Omega,\mathcal{F},\mathbb{P})$, 
$\alpha_{k,N}$, 
{$\beta_0^{k,N}$}, $\beta_{k,N}$, 
$k=1,\ldots,N$, 
{$N\in\Nat$}, 
are positive coupling constants 
{and $\sigma_0^{k,N} > 0$ is the noise amplitude.}

{In view of establishing the physical origin of AD, 
we set system \eqref{original system} such that conservation principles
are met, that is $i)$ the conservation of momentum 
and $ii)$ the conservation of energy,
this last in the form
of the fluctuation-dissipation theorem for the diffusion of 
the surround-particles. 
Therefore, we have  for all $k=1,\ldots,N $
\begin{equation}\label{eq:F}
F(U^{k,N}_t,V^N_t,
m_{k,N},\alpha_{k,N},\beta_0^{k,N},\beta_{k,N})
=-\frac{\beta_0^{k,N}+\beta_{k,N}}{m_{k,N}}U^{k,N}_t
+\frac{\alpha_{k,N}}{m_{k,N}}V^N_t \,,
\end{equation}
and the fluctuation-dissipation theorem implies 
\begin{align}\label{FDT}
\left(\beta_0^{k,N}+\beta_{k,N}\right)\kappa_BT
=\sigma_0^{k,N},\qquad k=1,\ldots,N \,, \quad N\in\Nat \,,
\end{align}
where $\kappa_B$ and $T$ are the Boltzman constant and 
the temperature, respectively.} 	
Furthermore, we impose the following assumption:
	
\begin{assumption}\label{asmp:hyp1}
We assume that there exists a constant $\sigma>0$ such that for any $N\in\Nat$ and any $k=1,\ldots,N$,
\begin{align}\label{Hypothesis 1} 
\frac{\sqrt{\sigma_0^{k,N}}}{m_{k,N}}=\sqrt{\sigma} \,.
\end{align}
\end{assumption}
Therefore,  
combining~{  \eqref{eq:F}}, ~\eqref{FDT} and~\eqref{Hypothesis 1} 
and using $\gamma:=\frac{\sigma}{\kappa_BT}$,  
we obtain the followng system of equations:
	
	\begin{align}\label{final system}
		\left\{ \begin{array}{ll}
			dX^N_t=V^N_tdt \,, & X^N_0={ 0} \,,\\
			dV^N_t=\frac1M\sum_{k=1}^N\left(\beta_{k,N}U^{k,N}_tdt-\alpha_{k,N}V^N_tdt  \right) \,, & V^N_0={ 0} \,,\\
			dY^{k,N}_t=U^{k,N}_tdt \,, & Y^{k,N}_0=y^{k,N}_0 \,,\\
			dU^{k,N}_t=-\gamma m_{k,N}U^{k,N}_tdt+\frac{\alpha_{k,N}}{m_{k,N}}V^N_tdt+\sqrt{2\sigma}dW^k_t \,, & U^{k,N}_0=u^{k,N}_0 \,,\\
			& k=1,\ldots,N \,.
		\end{array}  \right.
	\end{align}
	
	For each fixed $t_0>0$, we are interested in the behaviour of the system~\eqref{final system} during the time interval $[0,t_0]$ in the limit $N\to\infty$.
	In  order to streamline the notation,
	we write  $g_1 \simeq g_2$ if and only if $g_1(N)/g_2(N) \to 1$ as $N\to \infty$,
	$g_1 \propto g_2$ if and only if $\exists \, C>0$ such that $g_1 \simeq C \, g_2$, and $g_1\lesssim g_2$ if and only if $\exists \, C>0$ such that $g_1 \leq C \, g_2$ for all $N\in\Nat$.

Below we present a list of assumptions on parameters of system~\eqref{final system} which control the behaviour of all parameters in the limit $N\to\infty$ such that AD is generated.
These assumptions are the key-feature for 
driving the emerging AD to belong to a desired
special family of anomalous processes.
Actually, to this purpose, 
the main element is the function $v$ in Assumption~\ref{asmp:m} which prescribes the variance of the limiting AD process.

\begin{assumption}\label{asmp:coef}
		For  given parameters  $a>0$, $b>0$, $C_\alpha>0$, $C_\beta>0$,
		we assume that 
		{ \begin{align*}
				& \alpha_{k,N}\simeq C_\alpha N^{-a},\qquad k=1,\ldots,N,\\
				& \beta_{k,N}\simeq C_\beta N^{-b},\qquad k=1,\ldots,N.
			\end{align*} 
		}
\end{assumption}
\begin{assumption}\label{asmp:m}
(i)	For each fixed $N\in\Nat$, we assume that $m_{1,N},\ldots,m_{N,N}$ are 
		i.i.d. random variables  on the same probability space $(\Omega,\mathcal{F},\mathbb{P})$ with distribution $\mu_N$  supported in $[m^N_{min},\infty)\subset (0,\infty)$. We suppose that
		$$
		m^N_{min}\simeq  N^{-d}\quad \text{for some }\, d\in \mathbb{R}
		$$
and there exist constants  $\delta\geq 0$ and $C_\delta>0$, a sequence $(m^*_N)_{N\in\Nat}$ and a nondecreasing function $\dot v:[0,t_0]\rightarrow [0,\infty)$ such that  
$$
m^*_N\simeq C_{\delta}N^{-\delta} \qquad \text{and}\qquad 
\frac{\ee_N(t)}{m^*_N}\uparrow \dot v(t),\quad t\in [0,t_0].
$$
Here,
$$
\ee_N(t):= \int_{(0,\infty)} \frac{1-e^{-\gamma yt}}{y^2} \mu_N(dy),
$$
and we will use the notation 
$$
v(t):=\int_0^t \dot v(\tau)d\tau.
$$
 Moreover, we assume that there exists $d'\in \mathbb{R}$ such that
$$
\int_{(0,\infty)} y^{-4} \mu_N(dy) \lesssim N^{d'}.
$$
\noindent (ii) Furthermore, let $\mathcal{M}:=\left(m_{1,N},\ldots,m_{N,N}\right)_{N\in\Nat}$ denote the collection of all random masses. 
We assume that  $\mathcal{M}$ is a family of independent random variables and  that all random variables $m_{k,N}$, $N\in\Nat$, $k=1,\ldots,N$, are independent also of the Wiener processes $(W^1_t)_{t\geq0},\ldots, (W^{n}_t)_{t\geq0},\ldots$.  
\end{assumption}
 Note that, since $m_{k,N}$ are random, so are $\sigma_0^{k,N}$ and $\beta_0^{k,N}$ due to Assumption~\ref{asmp:hyp1} and equality~\eqref{FDT}.
\begin{assumption}\label{assmp:m extra}
For a part of our results, we will use also the following additional assumptions on the distribution $\mu_N$. They are either 
\begin{enumerate}
\item[(i)] There exist $\eps> 0$ and $C=C(\eps,t_0)>0$ such that the variance function $v(t)$ satisfies
\begin{align}\label{eq:assmp:v}
v(t)\leq C t^{1+\eps},\qquad t\in[0,t_0].
\end{align}
Moreover,  there exist $\epsilon\in(0,1)$, $C=C(\epsilon)>0$  and $N_0\in\Nat$ such that for every $N>N_0$
\begin{align}\label{eq:assmp:muN}
\frac{1}{m^*_N}\left(\int_{(0,\infty)}\left[ \frac{\gamma}{y} {\1}_{\{0<y\leq 1\}}+\frac{\gamma^\epsilon}{y^{2-\epsilon}}  {\1}_{\{y> 1\}} \right]\mu_N(dy) \right)< C.
\end{align}
\end{enumerate}
or
\begin{enumerate}
\item[(ii)] The masses $m_{k,N}$, $k=1,\ldots,N$, $N\in\Nat$, are deterministic, i.e., the distribution $\mu_N$ is the Dirac measure concentrated at some point on $(0,\infty)$; this point may depend on $N$. Moreover, there exists $C=C(t_0)>0$ such that
\begin{align}
v(t)\leq Ct,\qquad t\in[0,t_0].
\end{align}
\end{enumerate}
\end{assumption}

\begin{assumption}\label{assmp:u0}
We assume that $\left(u_0^{k,N},\, N\in\Nat,\, k=1,\ldots,N\right)$ is a family of  independent  random variables which are independent also of  all Wiener processes   $(W^1_t)_{t\geq0}$,..., $(W^{n}_t)_{t\geq0},\ldots$. And, given $\M$,  $u_0^{k,N}$ has Gaussian distribution $\mathcal{N}(0,\sigma/(\gamma m_{k,N}))$, $k=1,\ldots,N$.
\end{assumption}
	
\begin{assumption}\label{assmp:coef2}
We pose the following conditions on the parameters:
\begin{align*}
			\left\{\begin{array}{l}
				{ 0<a<1,\, b>0},\,{ d, d' \in \mathbb{R},\, \delta\geq 0},\\
				{ 2(a-b)-\delta=1},\\
				d'<5+8(b-a)\\
				{b>d}.
			\end{array}
			\right.
\end{align*}
\end{assumption}

The main result of this paper is the following theorem:	
\begin{theorem}\label{Thm}
(i) Fix any $t_0>0$. Under Assumptions~\ref{asmp:hyp1},~\ref{asmp:coef},~\ref{asmp:m},~\ref{assmp:u0},~\ref{assmp:coef2},
consider  a centered Gaussian process $(Z_t)_{t\in[0,t_0]}$ 
 with 
 covariance function
\begin{align*}
\Cov(Z_t,Z_s)=\left(\frac{2\sigma C^2_\beta C_\delta}{\gamma^2 C^2_\alpha}\right)\frac12\left(v(t)+v(s)-v(|t-s|)  \right).
\end{align*}
Then the processes $\left(X^N_t\right)_{t\in[0,t_0]}$ in~\eqref{final system}, $N\in\Nat$, converge as $N\to\infty$ to $(Z_t)_{t\in[0,t_0]}$  in finite dimensional distributions.

\smallskip

\noindent (ii) If, additionally, Assumption~\ref{assmp:m extra}~(i) or  Assumption~\ref{assmp:m extra}~(ii) is true,  the  processes $\left(X^N_t\right)_{t\in[0,t_0]}$ in~\eqref{final system}, $N\in\Nat$, converge as $N\to\infty$ to $(Z_t)_{t\in[0,t_0]}$  in distribution   on the space ${C}[0,t_0]$ of continuous functions from $[0,t_0]$ to $\mathbb{R}$.
\end{theorem}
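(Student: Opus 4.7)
The plan is to condition on the mass collection $\mathcal{M}$. By Assumption~\ref{asmp:m}(ii) and Assumption~\ref{assmp:u0}, $\mathcal{M}$ is independent of the driving Wiener processes $W^k$ and of the Gaussian initial velocities $u_0^{k,N}$, so once $\mathcal{M}$ is frozen the system~\eqref{final system} becomes a linear SDE with Gaussian noise and Gaussian initial data. Hence $(X^N_t)_{t\in[0,t_0]}$ is, conditionally on $\mathcal{M}$, a centred Gaussian process with $\sigma(\mathcal{M})$-measurable covariance $K_N(s,t):=\Cov(X^N_s,X^N_t\mid \mathcal{M})$. Proving
\begin{equation*}
K_N(s,t)\ \xrightarrow{\PP}\ \tfrac12\mathcal{D}\bigl(v(t)+v(s)-v(|t-s|)\bigr),\qquad \mathcal{D}:=\frac{2\sigma C_\beta^2 C_\delta}{\gamma^2 C_\alpha^2},
\end{equation*}
then forces convergence of finite-dimensional characteristic functions by dominated convergence applied to $\E\exp\bigl(-\tfrac12\sum_{j,\ell}\lambda_j\lambda_\ell K_N(t_j,t_\ell)\bigr)$. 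This is precisely the mixing-type scheme from~\cite{zbMATH06444973}.

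\textbf{Reduction to stationary Ornstein--Uhlenbeck processes.} Conditionally on $\mathcal{M}$, introduce the auxiliary independent stationary OU processes
\begin{equation*}
d\tilde U^{k,N}_t=-\gamma m_{k,N}\tilde U^{k,N}_t\,dt+\sqrt{2\sigma}\,dW^k_t,\qquad \tilde U^{k,N}_0=u_0^{k,N},
\end{equation*}
which by Assumption~\ref{assmp:u0} have covariance $\tfrac{\sigma}{\gamma m_{k,N}}e^{-\gamma m_{k,N}|t-s|}$. Write $\Lambda_N:=\tfrac1M\sum_k\alpha_{k,N}\propto N^{1-a}\to\infty$; variation of constants in the $V^N$-equation gives $V^N_t=\tfrac{1}{M}\int_0^t e^{-\Lambda_N(t-r)}\sum_k\beta_{k,N}U^{k,N}_r\,dr$. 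The feedback coefficient $\alpha_{k,N}/m_{k,N}\lesssim N^{d-a}\to 0$ (since $b>d$ and $2(a-b)=1+\delta>0$ via Assumption~\ref{asmp:coef2}) lets a Gronwall-type estimate replace $U^{k,N}$ by $\tilde U^{k,N}$ with a conditional $L^2$ error that vanishes; the divergence of $\Lambda_N$ then washes out the transient of $V^N$, yielding
\begin{equation*}
X^N_t\ \approx\ \frac{1}{M\Lambda_N}\sum_{k=1}^{N}\beta_{k,N}\int_0^t\tilde U^{k,N}_u\,du,\qquad K_N(s,t)\ \approx\ \frac{\sigma}{M^2\Lambda_N^2}\sum_{k=1}^{N}\frac{\beta_{k,N}^2}{\gamma m_{k,N}}\int_0^t\!\!\int_0^s e^{-\gamma m_{k,N}|u-w|}du\,dw.
\end{equation*}

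\textbf{Law of large numbers and identification of the limit.} A direct computation gives $\int_0^t\!\!\int_0^s e^{-\gamma y|u-w|}du\,dw=\tfrac{1}{\gamma y}\bigl(\psi_t(y)+\psi_s(y)-\psi_{|t-s|}(y)\bigr)$ with $\psi_\tau(y):=\tau-(1-e^{-\gamma y\tau})/(\gamma y)$, and the primitive $\psi_\tau(y)/y^2$ in $\tau$ coincides with the kernel $(1-e^{-\gamma y\tau})/y^2$ defining $\ee_N$ in Assumption~\ref{asmp:m}. Combining $\beta_{k,N}^2/(M^2\Lambda_N^2)\propto N^{2(a-b)-2}$ with the balance $2(a-b)-\delta=1$ and $m^*_N\simeq C_\delta N^{-\delta}$, the right-hand side becomes $\tfrac12\mathcal{D}$ times the empirical average $\frac{1}{N m^*_N}\sum_k G_{s,t}(m_{k,N})$, whose $\mu_N$-mean converges to $v(t)+v(s)-v(|t-s|)$ by the hypothesis $\ee_N/m^*_N\uparrow\dot v$ (and monotone convergence), while its variance is bounded via the i.i.d.\ structure and the integrability bound $\int y^{-4}d\mu_N\lesssim N^{d'}$, with the threshold $d'<5+8(b-a)$ ensuring that the variance vanishes. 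This delivers $K_N\to\tfrac12\mathcal{D}(v(t)+v(s)-v(|t-s|))$ in $L^1$, hence in probability, completing (i).

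\textbf{Tightness and main obstacle.} For (ii), conditional Gaussianity gives $\E(X^N_t-X^N_s)^4=3\E[\Var(X^N_t-X^N_s\mid\mathcal{M})^2]$, so Kolmogorov's criterion reduces to the uniform $L^2$-bound $\E\bigl[K_N(t,t)+K_N(s,s)-2K_N(s,t)\bigr]^2\lesssim|t-s|^{1+\eps'}$. Under Assumption~\ref{assmp:m extra}(i), the H\"older bound $v(t)\leq Ct^{1+\eps}$ together with~\eqref{eq:assmp:muN}---which simultaneously controls the singular contribution of small masses and the decay at large ones---provides this estimate; under Assumption~\ref{assmp:m extra}(ii) the masses are deterministic and an explicit OU computation combined with $v(t)\leq Ct$ does it directly. \emph{The main obstacle} in the entire program is precisely this $L^2$ law of large numbers for sums $\sum_k g(m_{k,N})$ with $g$ singular at $0$: the moment bound $\int y^{-4}d\mu_N\lesssim N^{d'}$ and the delicate exponent relation $d'<5+8(b-a)$ are what guarantee concentration despite the possible blow-up of $1/m_{k,N}$, and the careful book-keeping needed to align all the exponents of Assumption~\ref{asmp:coef2} so that every successive OU approximation error truly vanishes is the principal technical burden.
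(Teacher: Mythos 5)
Your proposal is correct in substance and follows essentially the same route as the paper: condition on $\mathcal{M}$ to exploit conditional Gaussianity, strip the feedback term and the transient of $V^N$ to reduce to $\mathcal{A}_N^{-1}\sum_k\beta_{k,N}\int_0^t\widetilde U^{k,N}$ with stationary Ornstein--Uhlenbeck processes, prove a law of large numbers for the conditional covariance using the i.i.d.\ masses and the $\int y^{-4}\mu_N(dy)\lesssim N^{d'}$ bound, and finish with a Kolmogorov tightness estimate. Two execution-level differences are worth noting. First, you establish convergence of $K_N(s,t)$ only in probability and pass to characteristic functions by dominated convergence; the paper instead proves almost-sure convergence of the conditional covariance via a fourth-moment Borel--Cantelli argument (Lemmas~\ref{4th moment} and~\ref{lem:Step3}), and it is this summability requirement, not a Chebyshev variance bound, that produces the precise threshold $d'<5+8(b-a)$ in Assumption~\ref{assmp:coef2} (a variance bound alone would only need $d'<3+4(b-a)$, which is weaker); your in-probability route is legitimate and even slightly more economical, and still yields the $\sigma(\M)$-mixing convergence the paper needs later. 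Second, for tightness under Assumption~\ref{assmp:m extra}~(i) you invoke the fourth-moment identity $\E[(X^N_t-X^N_s)^4]=3\,\E[\Var(X^N_t-X^N_s\mid\M)^2]$, which requires an $L^2$-bound on the conditional variance, i.e.\ a bound on $\E\bigl[\bigl(\sum_k\zeta_k\bigr)^2\bigr]$ including the diagonal term $N\E[\zeta_1^2]$; this is achievable under the stated assumptions but is strictly more than what you sketch, and the paper avoids it by applying Kolmogorov's criterion with $q=2$ directly to $\E[|\widetilde X^N_t-\widetilde X^N_s|^2]\le C|t-s|^{1+\epsilon\wedge\eps}$ (Lemma~\ref{lem:tightness}, where the increment is split into the piece over $[s,t]$, controlled by $v(|t-s|)$, and the transient memory over $[0,s]$, controlled by condition~\eqref{eq:assmp:muN}), reserving the Gaussian fourth-moment trick for the deterministic-mass case (ii) where the second-moment exponent is only $1$. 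If you flesh out that one estimate, your argument closes.
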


{
\begin{remark}
Note that the process $(Z_t)_{t\in[0,t_0]}$  has stationary increments. Indeed, $\E[Z_{t+h}-Z_t]=0$, $\Var(Z_{t+h}-Z_t)=2\mathcal{D}v(|h|)$ with $\mathcal{D}:=\frac{\sigma C^2_\beta C_\delta}{\gamma^2 C^2_\alpha} $ and, since the process  $(Z_t)_{t\in[0,t_0]}$ is Gaussian, the distribution of $Z_{t+h}-Z_t$  does  not depend on $t$.  
\end{remark}
}

We  present now some important special cases when the Assumptions~\ref{asmp:coef},~\ref{asmp:m}~(i),~\ref{assmp:m extra},~\ref{assmp:coef2} are satisfied and then outline the ideas of the proof.  The rigorous proof of Theorem~\ref{Thm} with all technical details is given in  Section~\ref{Proofs}.

\begin{example}\label{ex:case1}
Suppose $\nu$ is the L\'evy measure of a pure jump subordinator with Laplace exponent 
$\Phi(\lambda)=\int_{(0,\infty)} (1-e^{-\lambda y})\nu(dy)$.  We assume that $\nu$ is continuous and $\int_{(1,\infty)} y^2 \nu(dy)=\infty$. Now, fix $d,\delta>0$ and define  sequences $(m^N_{max})_{N\in\Nat}$ and $(m^N_{min})_{N\in\Nat}$ via $m^N_{min}:=N^{-d}$ and $\int_{(0,m^N_{max}]} y^2\nu(dy)  = N^{\delta}$. We consider the distribution $\mu_N$ given by
\begin{align}\label{eq:muN}
\frac{d \mu_N}{d\nu}(y)=m^*_N y^2{\1}_{(m^N_{min},m^N_{max}]}(y),
\end{align}
where the normalizing constant $m^*_N$ turns $\mu_N$ into a probability measure. Then, $m^*_N\simeq N^{-\delta}$ and for any $t_0>0$ and any $t\in[0,t_0]$
\begin{equation*}
\frac{\ee_N(t)}{m^*_N}=  \int_{(m^N_{min},m^N_{max}]} (1-e^{-\gamma yt})\; \nu(dy) \uparrow  \Phi(\gamma t)=:\dot v(t).
\end{equation*}
Note that $\Phi(\cdot)$ is  nonnegative and nondecreasing as a  Bernstein function. The limiting variance function is then
$$
v(t)=\int_0^t \Phi(\gamma \tau) d\tau.
$$ 
Moreover, since the L\'evy measure of any subordinator satisfies the condition $\int_{(0,\infty)} 1\wedge y \,\nu(dy)<\infty$ (here $\wedge$ stands for minimum), we have
\begin{align*}
\int_{(0,\infty)} y^{-4} \mu_N(dy)
&= m^*_N \int_{(m^N_{min},m^N_{max}]} y^{-2} \nu(dy) \\
&
\leq  m^*_N\left((m^N_{min})^{-3} \int_{(m^N_{min},1]} y \nu(dy) + \int_{(1,\infty)} \nu(dy)  \right) \lesssim N^{3d-\delta}.
\end{align*}
Hence, we may take $d'=3d-\delta$. And Assumption~\ref{asmp:m}~(i) is satisfied.  Then, Assumption~\ref{assmp:coef2} is satisfied in the following situation:
$$
b\in (0,1/2), \quad  a\in (b+1/2,(b+2/3)\wedge 1), \quad \delta=2(a-b)-1,\quad d<(4/3-2(a-b))\wedge b.
$$
Furthermore, condition~\eqref{eq:assmp:muN} of Assumption~\ref{assmp:m extra} has then the following view:
\begin{align}\label{eq:cond nu case 1}
\exists\,\epsilon\in(0,1)\,\,\text{and}\,\, C=C(\epsilon)>0\,:\qquad\int_{(1,\infty)} y^\epsilon   \nu(dy)\leq C,
\end{align}
since the condition $\int_{(0,1]}y\nu(dy)<\infty$ is satisfied by the L\'evy measure of any subordinator.

\bigskip

Let us consider now some special cases of the above setting:

\bigskip

\noindent (i) \textbf{Fractional Brownian motion:}
Let $H\in (1/2,1)$ and let $\nu$ be the L\'evy measure of the $(2H-1)$-stable subordinator, i.e., 
$$
\nu(dy):=\1_{(0,\infty)}(y)y^{-2H}dy.
$$
We define $\mu_N$ in accordance with~\eqref{eq:muN}. Note that $m^N_{max}=(3-2H)^{\frac{1}{3-2H}} N^{\frac{\delta}{3-2H}}$.
Since
$$
\Phi(\lambda)=\frac{\Gamma(2-2H)}{2H-1} \lambda^{2H-1},
$$
 we find the variance function
 $$
 v(t)=\frac{\Gamma(2-2H) \gamma^{2H-1}}{2H(2H-1)} t^{2H}.
 $$
Hence, up to a multiplicative factor, the limiting process is a fractional Brownian motion with Hurst parameter $H$. Condition~\eqref{eq:assmp:v}  is satisfied, e.g., with $\eps:=2H-1$ and $C:=\frac{\Gamma(2-2H) \gamma^{2H-1}}{2H(2H-1)}$; condition~\eqref{eq:cond nu case 1} is satisfied with any $\epsilon\in(0, 2H-1)$. Hence, Assumption~\ref{assmp:m extra}~(i) is  satisfied.

\bigskip

\noindent (ii) 
\textbf{Time-dependent anomalous exponent
from a mixture of power-law distributions of the surround-masses:
A mixture of fBm's  with different Hurst parameters:}
Let $K\in\Nat$ and $H_1<\ldots<H_K$ with $H_1, H_K\in(1/2,1)$. Let 
$$
\nu(dy):=\1_{(0,\infty)}(y)\sum_{k=1}^K y^{-2H_k}\,dy.
$$
We define $\mu_N$ in accordance with~\eqref{eq:muN}. Then 
$$
\Phi(\lambda)= \sum_{k=1}^K \frac{\Gamma(2-2H_k)}{2H_k-1} \lambda^{2H_k-1},
$$
and we find the variance function
 $$
 v(t)=\sum_{k=1}^K \frac{\Gamma(2-2H_k) \gamma^{2H_k-1}}{2H_k(2H_k-1)} t^{2H_k}.
 $$
Hence,  the limiting process is a sum of  independent fBm's  with Hurst parameters $H_1,\ldots,H_K$ (up to  multiplicative factors). In this case, the character of anomalous diffusion changes with time: $v(t)\approx  \frac{\Gamma(2-2H_1) \gamma^{2H_1-1}}{2H_1(2H_1-1)} t^{2H_1}$ for small $t$ and $v(t)\approx  \frac{\Gamma(2-2H_K) \gamma^{2H_K-1}}{2H_K(2H_K-1)} t^{2H_K}$ for large $t$. Assumption~\ref{assmp:m extra}~(i) is satisfied, e.g., with $\eps:=2H_1-1$ and $\epsilon\in(0,2H_1-1)$.

\bigskip

\noindent (iii) Further couples $(\nu,\Phi)$ satisfying the setting (cp.~\cite{BF}):
\begin{align*}
& \nu(dy):=\frac{(2-y)e^{-1/y}+y}{2\sqrt{\pi}y^{5/2}}dy\qquad\text{and}\qquad \Phi(\lambda):=\sqrt{\lambda}\left(1-e^{-2\sqrt{\lambda}}  \right);\\
&
\nu(dy):=\frac{1-e^{-y}(1+y)}{y^2}dy\qquad\text{and}\qquad\Phi(\lambda):=\lambda\log(1+1/\lambda).
\end{align*}
\end{example}

\begin{example}\label{ex:case1b}
Suppose $\nu$ is the L\'evy measure of a pure jump subordinator with Laplace exponent $\Phi(\lambda)=\int_{(0,\infty)} (1-e^{-\lambda y})\nu(dy)$.  We now assume that  $\int_{(1,\infty)} y^2 \nu(dy)<\infty$. Let $m^N_{min}=N^{-d}$ for some $d>0$ and consider the distribution $\mu_N$ given by
\begin{align}\label{eq:muN case b}
\frac{d \mu_N}{d\nu}(y)=m^*_N y^2{\1}_{(m^N_{min},\infty)}(y),
\end{align} 
where the normalizing constant $m^*_N$ turns $\mu_N$ into a probability measure. Therefore, $m^*_N\simeq (\int_{(0,\infty)} y^2 \nu(dy))^{-1}$, i.e., $\delta=0$. Moreover
\begin{equation*}
\frac{\ee_N(t)}{m^*_N}:=  \int_{(m^N_{min},\infty)} (1-e^{-\gamma yt})\; \nu(dy) \uparrow  \Phi(\gamma t)=:\dot v(t),
\end{equation*}
leading, as before, to the limiting variance function
$$
v(t)=\int_0^t \Phi(\gamma \tau) d\tau.
$$ 
A similar reasoning as before shows
\begin{align*}
\int_{(0,\infty)} y^{-4} \mu_N(dy)\leq m^*_N (m^N_{min})^{-3} \int_{(m^N_{min},\infty)} y \nu(dy) \lesssim N^{3d}.
\end{align*}
Hence, we may take $d'=3d$. And Assumption~\ref{asmp:m}~(i) is satisfied. Assumption  \ref{assmp:coef2} is, then, satisfied in the following situation:
$$
b\in (0,1/2), \quad  a=b+1/2, \quad  d<(1/3)\wedge b.
$$
Condition~\eqref{eq:assmp:muN} of Assumption~\ref{assmp:m extra} has again the form~\eqref{eq:cond nu case 1} and is satisfied for any $N\in\Nat$ and any $\epsilon\in(0,1]$.

\bigskip

Let us consider now some special cases of the above setting:

\bigskip

\noindent (i) \textbf{Transition from ballistic diffusion to superdiffusion:} Let $H\in (1/2,1)$ and let $\nu$ be the L\'evy measure of a tempered $(2H-1)$-stable subordinator, i.e., 
$$
\nu(dy):=\1_{(0,\infty)}(y)e^{-y} y^{-2H}dy.
$$
Hence, $\int_{(0,\infty)}y^2\,\nu(dy)<\infty$. We define $\mu_N$ in accordance with~\eqref{eq:muN case b}. Then (cp.~\cite{BF}),
$$
\Phi(\lambda)=(\lambda+1)^{2H-1}-1.
$$
Hence, we obtain the variance function $v(t)=\frac{(\gamma t+1)^{2H}-1}{2H\gamma }-t$. In this case, the character of anomalous diffusion changes with time:  $v(t)\approx C t^2$ for small $t$ and $v(t)\approx C t^{2H}$ for large $t$. Condition~\eqref{eq:assmp:v} is then satisfied, e.g., with $\eps:=2H-1$. Hence, Assumption~\ref{assmp:m extra}~(i) is true.

\bigskip

\noindent (ii) \textbf{Transition from ballistic diffusion to classical diffusion:} Let   $\nu$ be the following L\'evy measure: 
$$
\nu(dy):=\1_{(0,\infty)}(y)\gamma e^{-\gamma y}dy.
$$
Hence, $\int_{(0,\infty)}y^2\,\nu(dy)<\infty$. We define $\mu_N$ in accordance with~\eqref{eq:muN case b}. Then (cp.~\cite{BF}),
$$
\Phi(\lambda)=\frac{\lambda}{\lambda+\gamma}.
$$
Hence, we obtain the variance function $v(t)=t-\log(t+1)$. In this case, the character of anomalous diffusion changes with time:  $v(t)\approx C t^2$ for small $t$ and $v(t)\approx  t$ for large $t$. Therefore, condition~\eqref{eq:assmp:v} is satisfied for any $\eps\in(0,1]$ with some suitable constant $C=C(\eps,t_0)>0$.

\bigskip

\noindent (iii) Further couples $(\nu,\Phi)$ satisfying the setting (cp.~\cite{BF}):
\begin{align*}
& \nu(dy)=\1_{(0,\infty)}(y)e^{-y}y^{-1}dy,\qquad \Phi(\lambda)=\log(1-\lambda);\\
&
\nu(dy)=\1_{(0,{\infty})}(y)\frac{1}{\Gamma(1-\alpha)}e^{-y}y^{-\alpha}dy,\qquad\Phi(\lambda)=1-(1+\lambda)^{\alpha-1},\quad \alpha\in(0,1).
\end{align*}
\end{example}

\begin{example}\label{Example:BM}
In this example, the masses $m_{k,N}$, $k=1,\ldots,N$, $N\in\Nat$, are deterministic.

\bigskip

\noindent (i) \textbf{Classical diffusion:}
Fix some $\delta>0$ and define $\mu_N$ to be the Dirac measure concentrated at $N^{\delta/2}$. Then,
$$
N^{\delta} \ee_N(t)= \int_{(0,\infty)} \frac{1-e^{-\gamma yt}}{y^2} \mu_N(dy)=  1-e^{-\gamma N^{\delta/2}t}\uparrow 1_{(0,t_0]}(t)
$$
leading to $v(t)=t$, and, hence, to a   Wiener process as limiting process. Hence, Assumption~\ref{assmp:m extra}~(ii) is satisfied. Further, we may take $d=-\delta/2<0$ and $m^*_N=N^{-\delta}$. Moreover,
\begin{align*}
\int_{(0,\infty)} y^{-4} \mu_N(dy)= N^{-2\delta},
\end{align*}
i.e., $d'=-2\delta<0$. With such choice of parameters, Assumption~\ref{asmp:m}~(i) is satisfied.  Assumption~\ref{assmp:coef2} is, then, satisfied in the following situation:
$$
b\in (0,1/2), \quad  a\in (b+1/2,(b+3/4)\wedge 1), \quad \delta=2(a-b)-1.
$$

\bigskip

\noindent (ii)  \textbf{Transition from ballistic diffusion to classical diffusion:} Let now the masses $m_{k,N}$, $k=1,\ldots,N$, $N\in\Nat$ be deterministic and do not change with $N$, say, $\mu_N$ is the Dirac measure concentrated at $1$.  Then $m^N_{min}=1$, $d=0$,  
$$
 \ee_N(t)= \int_{(0,\infty)} \frac{1-e^{-\gamma yt}}{y^2} \mu_N(dy)=  1-e^{-\gamma  t}=\dot{v}(t),
$$
and hence $\delta=0$, $m^*_N=1$, $d'=0$. With such choice of parameters, Assumption~\ref{asmp:m}~(i) is satisfied. Assumption~\ref{assmp:coef2} is, then, satisfied in the following situation:
$$
b\in(0,1/2),\quad a = b+1/2.
$$
The variance function is  $v(t)=t+\frac1\gamma\left( e^{-\gamma t}-1\right)$. In this case, the character of anomalous diffusion changes with time:  $v(t)\approx C t^2$ for small $t$ and $v(t)\approx  t$ for large $t$. Hence, Assumption~\ref{assmp:m extra}~(ii) is satisfied.
\end{example}	

\medskip

Below we outline the ideas and the structure of the proof of Theorem~\ref{Thm}. A complete rigorous proof can be found in  Section~\ref{Proofs}.

\medskip	
	
\noindent \textbf{Step 1}: We show that we may neglect the drift terms $\frac{\alpha_{k,N}}{m_{k,N}}V^N_tdt$ in the system~\eqref{final system} when $N\to\infty$. Hence, we may replace the processes $(U^{k,N}_t)_{t\in[0,t_0]}$ in the system~\eqref{final system} by the corresponding Ornstein-Uhlenbeck processes $(\widetilde{U}^{k,N}_t)_{t\in[0,t_0]}$,
	\begin{align}\label{eq:OUprocess}
		\myU^{k,N}_t:=u^{k,N}_0 e^{-\gamma m_{k,N}t}+\sqrt{2\sigma}\int_0^t e^{-\gamma m_{k,N}(t-s)}dW^k_s,\qquad k=1,\ldots,N.
	\end{align}
Therefore, solving the ODEs in the first two lines of  the system~\eqref{final system}, we may approximate the position of the test-particle  as $N\to\infty$  by the process  $\left(\MyX^N_t\right)_{t\in[0,t_0]}$, given by
	\begin{align}\label{eq:MyX}
		\MyX^N_t:=\int_0^t\MyV^N_\tau d\tau:=\frac{1}{M}\int_0^t e^{-\frac{\myA_N}{M}s}\sum_{k=1}^N\beta_{k,N}\int_s^t\myU^{k,N}_{\tau-s}d\tau ds,\qquad t\in[0,t_0],
	\end{align}
	where $\myA_N:=\sum_{k=1}^N\alpha_{k,N}$.
	\medskip
	
\noindent \textbf{Step 2}: We show that, as $N\to\infty$,  the process $\left(\MyX^N_t\right)_{t\in[0,t_0]}$  can be approximated by  the process $\left(\MyZ^N_t\right)_{t\in[0,t_0]}$, where
	\begin{align}\label{eq:MyZ}
		\MyZ^N_t:= \frac{1}{\myA_N}\sum_{k=1}^N\beta_{k,N}\int_0^t \myU^{k,N}_\tau d\tau. 
	\end{align}

	\medskip
	
	\noindent \textbf{Step 3}: We show that 
	the processes $\left( \MyZ^N_t \right)_{t\in[0,t_0]}$, $N\in\Nat$, converge in a suitable sense  to the  process $\left(Z_t   \right)_{t\in[0,t_0]}$ as in the statement of Theorem~\ref{Thm}. Finally, combining the results of all three steps, we obtain the statement of the Theorem.
	
\begin{remark}	Pay attention, that the scaling coefficients $\frac{\beta_{k,N}}{\myA_N}\simeq C N^{a-b-1}$.  In the case $\delta=0$, this scaling correspond to the scaling in the classical Central Limit Theorem. In the case $\delta>0$, this scaling is  worse than that one  in the Central Limit Theorem since $a-b-1>-1/2$ due to Assumption~\ref{assmp:coef2}. This is however compensated by the good properties of Ornstein-Uhlenbeck processes $(\widetilde{U}^N_t)_{t\in[0,t_0]}$. 
\end{remark}

\section{Anomalous Diffusion as a Result of 
both Ensemble Heterogeneity of the Surround-Particles and Individual Inhomogeneity of
the Test-Particles.
}\label{SuperstatFBM}

In this Section, 
we investigate which family of AD 
can be obtained by the same approach as in Section~\ref{sec:statement} combining now 
ensemble heterogeneity of the surround-particles
with the individual inhomogeneity of
the test-particles.
We consider again the same system of particles as before, namely, system~\eqref{final system} and investigate its behaviour on a time interval $[0,t_0]$ when $N\to\infty$. However, we modify our assumptions in the following way:

\begin{assumption}\label{asmp:coef new}
We assume that the diffusing test-particle is a complex molecule with its own individual  structure features, shape, hydrodynamic radius etc. This can be considered as an intrinsic individual inhomogeneity of the diffusing particles, i.e., even if we would consider several test-particles of the same kind and with the same mass $M$, the coupling of each one to the surround is individual. We describe this intrinsic 
individual inhomogeneity by a random variable  $C_\alpha$ on the same probability space $(\Omega,\FF,\PP)$, which is strictly positive and independent of all the Wiener processes $(W^1_t)_{t\geq0},\ldots, (W^{n}_t)_{t\geq0},\ldots$. Let $A:=(C_\alpha)^{-2}$. 
We assume that  
$$
\E[A]<\infty.
$$
  Furthermore, for  given parameters  $a>0$, $b>0$, $C_\beta>0$, we assume that 
\begin{align*}
& \alpha_{k,N}   = C_\alpha N^{-a},\qquad k=1,\ldots,N,\\
&
 \beta_{k,N}\simeq C_\beta N^{-b},\qquad k=1,\ldots,N.
\end{align*} 
		
\end{assumption}
\begin{assumption}\label{asmp:m new}
\noindent  (i) We consider a random vector $H$ on the probability space $(\Omega,\FF,\PP)$ which is independent of $A$ and of all Wiener processes $(W^1_t)_{t\geq0},\ldots, (W^{n}_t)_{t\geq0},\ldots$, and takes values in a Borel-measurable subset $\mathcal{H}$ of $\mathbb{R}^K$ (for some $K\in\Nat$), equipped with the Borel-$\sigma$-field  $\mathcal{B}(\mathcal{H})$.

\medskip

\noindent(ii) 
 We assume that, conditionally on $H$, the random masses $m_{k,N}$, $k=1,\ldots,N$, for each fixed $N\in \Nat$ are i.i.d. random variables  on the same probability space $(\Omega,\mathcal{F},\mathbb{P})$ and that, for a fixed regular version $\mu_N:\mathcal{H}\times \mathcal{B}(\mathbb{R})\rightarrow [0,1]$ of the  conditional distribution of $m_{k,N}$ on $H$, $\mu_N(h,\cdot)$ is 
 supported in $[m^N_{min},\infty)\subset (0,\infty)$, where the constant $m^N_{min}$  does not depend on $h\in \mathcal{H}$.

\medskip

\noindent(iii)   We assume that
		$$
		m^N_{min}\simeq  N^{-d}\quad \text{for some }\, d\in \mathbb{R}
		$$
and there exist constants  $\delta\geq 0$ and $C_\delta>0$ and a sequence $(m^*_N)_{N\in\Nat}$, satisfying $m^*_N\simeq C_{\delta}N^{-\delta}$, such that  for each fixed  $h\in\mathcal{H}$ there exists   a nondecreasing function $\dot v_h:[0,t_0]\rightarrow [0,\infty)$ satisfying 
$$
\frac{\ee^{(h)}_N(t)}{m^*_N}\uparrow \dot v_h(t),\quad t\in [0,t_0].
$$
Here,
$$
\ee^{(h)}_N(t):= \int_{(0,\infty)} \frac{1-e^{-\gamma yt}}{y^2} \mu_N(h,dy),
$$
and we will use the notation 
$
v_h(t):=\int_0^t \dot v_h(\tau)d\tau.
$
 Moreover, we assume that there exists $d'\in \mathbb{R}$ and $C>0$, such that for all  $h\in\mathcal{H}$
$$
\int_{(0,\infty)} y^{-4} \mu_N(h,dy) \leq C N^{d'}.
$$

\medskip

\noindent (iv) Furthermore, let $\mathcal{M}:=\left(m_{1,N},\ldots,m_{N,N}\right)_{N\in\Nat}$ denote the collection of all random masses. We assume that, given $H$, the collection  $\mathcal{M}$ is a family of independent random variables. Moreover,  all random variables $m_{k,N}$, $N\in\Nat$, $k=1,\ldots,N$, are independent   from   $A$ and all the  Wiener processes $(W^1_t)_{t\geq0},\ldots, (W^{n}_t)_{t\geq0},\ldots$.  
\end{assumption}

\begin{assumption}\label{assmp:m extra new}
We will use also the following  assumptions on the (conditional) distribution $\mu_N(h,\cdot)$, given $H=h$. They are either 
\begin{enumerate}
\item[(i)] There exist $\eps>0$ and $C=C(\eps,t_0)>0$ such that for all  $h\in\mathcal{H}$ the variance function $v_h(t)$ satisfies
\begin{align}\label{eq:assmp:v new}
v_h(t)\leq C t^{1+\eps},\qquad t\in[0,t_0].
\end{align}
Moreover,  there exist $\epsilon\in(0,1)$, $C=C(\epsilon)>0$  and $N_0\in\Nat$ such that for every $N>N_0$ and for each  $h\in\mathcal{H}$
\begin{align}\label{eq:assmp:muN new}
\frac{1}{m^*_N}\left(\int_{(0,\infty)}\left[ \frac{\gamma}{y} {\1}_{\{0<y\leq 1\}}+\frac{\gamma^\epsilon}{y^{2-\epsilon}}  {\1}_{\{y> 1\}} \right]\mu_N(h,dy) \right)< C.
\end{align}
\end{enumerate}
or
\begin{enumerate}
\item[(ii)] For each  $h\in\mathcal{H}$, the masses $m_{k,N}(h)$, $k=1,\ldots,N$, $N\in\Nat$, are deterministic, i.e. the (conditional) distribution $\mu_N(h,\cdot)$ is the Dirac measure concentrated at some point on $(0,\infty)$; this point may depend on {$h$} and $N$. Moreover, there exists $C=C(t_0)>0$ such that for all  $h\in\mathcal{H}$
\begin{align}
v_h(t)\leq Ct,\qquad t\in[0,t_0].
\end{align}
\end{enumerate}
\end{assumption}

\begin{assumption}\label{assmp:u0 new}
We assume that, given $H$, the collection $\left(u_0^{k,N}=u_0^{k,N}(H),\, N\in\Nat,\, k=1,\ldots,N\right)$ is a family of  independent  random variables which are independent also of   $A$ and all Wiener processes   $(W^1_t)_{t\geq0}$,..., $(W^{n}_t)_{t\geq0},\ldots$. And, given $\M$,  $u_0^{k,N}$ has Gaussian distribution $\mathcal{N}(0,\sigma/(\gamma m_{k,N}))$, $k=1,\ldots,N$.
\end{assumption}

Let now $ \myX:= {C}[0,t_0]$ be the space of all continuous functions defined on the segment $[0,t_0]$. Let $\BB(\myX)$ be the Borel sigma-field  on $\myX$.

\begin{theorem}\label{Thm2}
Fix any $t_0>0$. Under Assumptions~\ref{asmp:hyp1},~\ref{asmp:coef new},~\ref{asmp:m new},~\ref{assmp:u0 new},~\ref{assmp:coef2}, and Assumption~\ref{assmp:m extra new}~(i) or  Assumption~\ref{assmp:m extra new}~(ii) 
consider    a  process $(Z_t)_{t\in[0,t_0]}$,
\begin{align}\label{eq:new Z}
Z_t:=Z_t(A,H):=\sqrt{A}G^{(H)}_t, \qquad t\in[0,t_0],
\end{align}
where $G:\mathcal{H}\times [0,t_0]\times \Omega \rightarrow \mathbb{R}$ is assumed to be a measurable centered Gaussian field   independent of $A$ and $H$ and such that, for every $h\in \mathcal{H}$,   $(G^{(h)}_t)_{t\in[0,t_0]}$ has continuous paths and covariance function 
\begin{align*}
\Cov(  G^{(h)}_t,G^{(h)}_s)=\left( \frac{2\sigma C^2_\beta C_\delta}{\gamma^2 }\right)\frac12\left(v_h(t)+v_h(s)-v_h(|t-s|)  \right).
\end{align*}
Then the processes $\left(X^N_t\right)_{t\in[0,t_0]}$ in   system~\eqref{final system}, $N\in\Nat$, considered as $\left( \myX,\BB(\myX) \right)$-valued random elements,  converge as $N\to\infty$ to   $(Z_t)_{t\in[0,t_0]}$  in  distribution.   
\end{theorem}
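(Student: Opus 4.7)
The natural approach is to reduce Theorem~\ref{Thm2} to Theorem~\ref{Thm} by conditioning on $(A,H)$ and then averaging. Set $\GG:=\sigma(A,H)$. Since $C_\alpha=A^{-1/2}$ enters the system only as a scalar factor in $\alpha_{k,N}$; since the masses $(m_{k,N})$ are, conditionally on $H$, i.i.d.\ with distribution $\mu_N(H,\cdot)$; and since the Wiener processes and the initial-velocity variables are independent of $(A,H)$ with the laws prescribed by Assumptions~\ref{asmp:m new} and~\ref{assmp:u0 new}, one checks that, for each realisation $(A,H)=(a,h)$, the system~\eqref{final system} satisfies the hypotheses of Theorem~\ref{Thm}(ii) with $C_\alpha$ replaced by $a^{-1/2}$ and $\mu_N$ by $\mu_N(h,\cdot)$. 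The bounds in Assumption~\ref{assmp:m extra new} are imposed uniformly in $h\in\mathcal{H}$, so all estimates entering the proof of Theorem~\ref{Thm} are available on a set of full $\PP_{(A,H)}$-measure.

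The first concrete step is therefore to fix a regular conditional probability $\PP(\,\cdot\mid A=a,H=h)$ and to construct $X^N$ as a measurable functional of $(A,H,\M,(u_0^{k,N}),(W^k))$, so that $(a,h)\mapsto \E[F(X^N)\mid A=a,H=h]$ is a jointly measurable version of the conditional expectation for every bounded continuous $F:{C}[0,t_0]\to\mathbb{R}$. Applying Theorem~\ref{Thm}(ii) pointwise in $(a,h)$ then yields
$$
\E\bigl[F(X^N)\mid A=a,H=h\bigr] \;\longrightarrow\; \E\bigl[F(\sqrt{a}\,G^{(h)})\bigr] \qquad \text{as } N\to\infty,
$$
for $\PP_{(A,H)}$-almost every $(a,h)$, because the conditional limiting covariance $a\cdot\frac{2\sigma C_\beta^2 C_\delta}{\gamma^2}\cdot\frac12\bigl(v_h(t)+v_h(s)-v_h(|t-s|)\bigr)$ coincides with that of $\sqrt{a}\,G^{(h)}$ and $G$ is independent of $(A,H)$ by assumption. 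Boundedness of $F$ and dominated convergence then lift conditional to unconditional weak convergence,
$$
\E[F(X^N)] = \int \E\bigl[F(X^N)\mid A=a,H=h\bigr]\, d\PP_{(A,H)}(a,h) \;\longrightarrow\; \int \E\bigl[F(\sqrt{a}\,G^{(h)})\bigr]\, d\PP_{(A,H)}(a,h) = \E[F(Z)],
$$
and the portmanteau theorem delivers the asserted weak convergence on $(\myX,\BB(\myX))$.

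The main obstacle is the bookkeeping in the conditioning step: one must confirm that the quantitative estimates in Section~\ref{Proofs} (elimination of the $V^N$-drift in Step~1, approximation by $\MyZ^N$ in Step~2, and the Gaussian scaling-limit in Step~3) depend on $\mu_N$ only through quantities controlled uniformly in $h\in\mathcal{H}$ by Assumption~\ref{assmp:m extra new}. Since those are precisely the ingredients that Assumption~\ref{assmp:m extra new} was designed to supply, the reduction to Theorem~\ref{Thm} is essentially automatic once the measurability of the conditional laws has been established, and no additional analysis is required beyond the conditional application.
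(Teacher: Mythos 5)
Your proposal is correct and follows essentially the same route as the paper: condition on $(A,H)$, apply Theorem~\ref{Thm}(ii) to the system with $C_\alpha=a^{-1/2}$ and mass distribution $\mu_N(h,\cdot)$ for each fixed $(a,h)$ (the uniformity in $h$ of Assumptions~\ref{asmp:m new} and~\ref{assmp:m extra new} guaranteeing the hypotheses hold pointwise), identify the conditional limit as $\sqrt{a}\,G^{(h)}$, and integrate out $(A,H)$ via Fubini and dominated convergence for bounded continuous test functionals. The only difference is cosmetic: you make the measurability of $(a,h)\mapsto\E[F(X^N)\mid A=a,H=h]$ explicit, which the paper leaves implicit.
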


\begin{proof}
This result follows immediately from Theorem~\ref{Thm}. Indeed, 
for each given  $h\in\mathcal{H}$ and each given value $a$ of $A$, the system of particles  under consideration coincides with the system of particles in the setting of Theorem~\ref{Thm}, and the corresponding solution $\left(X^N_t=X^N_t(a,h)\right)_{t\in[0,t_0]}$   is the solution  of system~\eqref{final system} in the setting of  Theorem~\ref{Thm}. So, the solution $\left(X^N_t=X^N_t(A,H)\right)_{t\in[0,t_0]}$ of system~\eqref{final system} in the setting of Theorem~\ref{Thm2}  for each given  $h\in\mathcal{H}$  and each given value $a$ of $A$ converges in distribution on the space $C[0,t_0]$ to the process 
\begin{align}\label{eq:Z(a,h)}
Z_t(a,h):=\sqrt{a}G^{(h)}_t,\qquad  t\in[0,t_0],
\end{align}
as in~\eqref{eq:new Z} by Theorem~\ref{Thm}. 
The scaling $\sqrt{a}$ in the right hand side of~\eqref{eq:Z(a,h)} appears due to Step~2 in the proof of Theorem~\ref{Thm},   since  $\left(X^N_t(a,h)\right)_{t\in[0,t_0]}$ converges in distribution to the same limit as the process $\left(\widetilde{Z}^N_t(a,h)\right)_{t\in[0,t_0]}$ (given in formula~\eqref{eq:MyZ}), where 
$\widetilde{Z}^N_t(a,h)$ has the form
$$
\widetilde{Z}^N_t(a,h)=\sqrt{a}\widehat{Z}^N_t(h),\qquad \widehat{Z}^N_t(h):= N^{a-1}\sum_{k=1}^N\beta_{k,N}\int_0^t \myU^{k,N}_\tau d\tau. 
$$
 and $\left(\widehat{Z}^N_t(h)\right)_{t\in[0,t_0]}$ converges in distribution to a centered Gaussian process with covariance function $\left( \frac{2\sigma C^2_\beta C_\delta}{\gamma^2 }\right)\frac12\left(v_h(t)+v_h(s)-v_h(|t-s|)  \right)$ by Theorem~\ref{Thm}.

Furthermore, for any continuous and bounded   mapping $f\,:\,\left(\myX,\BB(\myX)\right)\to\cR$,  we have by Theorem~\ref{Thm}, the Fubini Theorem and  the Dominated Convergence Theorem
\begin{align*}
\E[f(X^N(A,H))]&= \E\left[\E[f(X^N(A,H))\,|\,A,H]\right]=\int_0^1\left(\int_0^\infty \E[f(X^N(a,h))]\mathcal{P}_A(da)\right)\mathcal{P}_H(dh)\\
&
\longrightarrow  \int_0^1\left(\int_0^\infty \E[f(Z(a,h))]\mathcal{P}_A(da)\right)\mathcal{P}_H(dh) =\E[f(Z(A,H))],\quad N\to\infty.
\end{align*}
Here $\mathcal{P}_H$ denotes the distribution of $H$ and $\mathcal{P}_A$ denotes the distribution of $A$. Therefore, the processes $\left(X^N_t(A,H)\right)_{t\in[0,t_0]}$ in~\eqref{final system}, $N\in\Nat$, considered as $\left( \myX,\BB(\myX) \right)$-valued random elements,   converge as $N\to\infty$ to   $(Z_t(A,H))_{t\in[0,t_0]}$  in  distribution. 
\end{proof}

We  present  two  important special cases of the result of Theorem~\ref{Thm2}, which follow   from Example~\ref{ex:case1}.  For convenience of the reader  we present the whole argumentation here.

\begin{example}
For each  $h\in\mathcal{H}$ let $\nu_h$ be the L\'evy measure of a pure jump subordinator with Laplace exponent 
$\Phi_h(\lambda)=\int_{(0,\infty)} (1-e^{-\lambda y})\nu_h(dy)$.  We assume that $\nu_h$ is continuous and $\int_{(1,\infty)} y^2 \nu_h(dy)=\infty$. We assume also that
\begin{align}\label{eq:star}
\sup_{h\in \mathcal{H}}\,\,\int_{(0,\infty)}1 \wedge y\,\, \nu_h(dy)<\infty.
\end{align}
Now, fix $d,\delta>0$ and define  sequences $(m^N_{max}(h))_{N\in\Nat}$ and $(m^N_{min})_{N\in\Nat}$ via $m^N_{min}:=N^{-d}$ and $\int_{(m^N_{min},m^N_{max}(h)]} y^2\nu_h(dy)  = N^{\delta}$. For each  $h\in\mathcal{H}$ we consider the distribution $\mu_N(h,\cdot)$ given by
\begin{align}\label{eq:muN new}
\frac{d \mu_N(h,\cdot)}{d\nu_h}(y)=m^*_N y^2{\1}_{(m^N_{min},m^N_{max}(h)]}(y),
\end{align}
where the normalizing constant $m^*_N:=\left(\int_{(m^N_{min},m^N_{max}(h)]} y^2\nu_h(dy)   \right)^{-1}=N^{-\delta}$ turns $\mu_N(h,\cdot)$ into a probability measure. Then,  for any $t_0>0$ and any $t\in[0,t_0]$
\begin{equation*}
\frac{\ee^{(h)}_N(t)}{m^*_N}=  \int_{(m^N_{min},m^N_{max}(h)]} (1-e^{-\gamma yt})\; \nu_h(dy) \uparrow  \Phi_h(\gamma t)=:\dot v_h(t).
\end{equation*}
Note that $\Phi_h(\cdot)$ is  nonnegative and nondecreasing as a  Bernstein function. The limiting variance function is then
$$
v_h(t)=\int_0^t \Phi_h(\gamma \tau) d\tau.
$$ 
Due to~\eqref{eq:star}, we have
\begin{align*}
&	\int_{(0,\infty)} y^{-4} \mu_N(h,dy)= m^*_N \int_{(m^N_{min},m^N_{max}(h)]} y^{-2} \nu_h(dy) \\
&
\leq  m^*_N\left((m^N_{min})^{-3} \int_{(m^N_{min},1]} y \nu_h(dy) + \int_{(1,\infty)} \nu_h(dy)  \right) \lesssim N^{3d-\delta}.
\end{align*}
Hence, we may take $d'=3d-\delta$. And Assumption~\ref{asmp:m new}~(iii) is satisfied.  Then, Assumption~\ref{assmp:coef2} is satisfied in the following situation:
$$
b\in (0,1/2), \quad  a\in (b+1/2,(b+2/3)\wedge 1), \quad \delta=2(a-b)-1,\quad d<(4/3-2(a-b))\wedge b.
$$
 Furthermore, due to~\eqref{eq:star},  condition~\eqref{eq:assmp:muN new} of Assumption~\ref{assmp:m extra new} has then the following view:
\begin{align}\label{eq:cond nu case 1 new}
\exists\,\epsilon\in(0,1)\,\,\text{and}\,\, C=C(\epsilon)>0\,:\qquad\text{for all  } h\in\mathcal{H} \text{ we have }\int_{(1,\infty)} y^\epsilon   \nu_h(dy)\leq C.
\end{align}


Now let us consider  two  special cases of the above setting.

\bigskip

\noindent  (i) \textbf{Randomly scaled (superstatistical) fBm  with random Hurst parameter:}  Let  $H$ be a random variable  taking values in the interval $\mathcal{H}=[h_0,h^0]$ where $1/2< h_0<h^0<1$. For each  $h\in\mathcal{H}$ let $\nu_h$ be the L\'evy measure of the $(2h-1)$-stable subordinator, i.e., 
$$
\nu_h(dy):=\1_{(0,\infty)}(y)y^{-2h}dy.
$$
We define $\mu_N(h,\cdot)$ in accordance with~\eqref{eq:muN new}. 
Since
$$
\Phi_h(\lambda)=\frac{\Gamma(2-2h)}{2h-1} \lambda^{2h-1},
$$
 we find the variance function
 $$
 v_h(t)=\frac{\Gamma(2-2h) \gamma^{2h-1}}{2h(2h-1)} t^{2h}.
 $$
Hence, given $H=h$,  the limiting process $\left(G^{(h)}_t\right)_{t\in[0,t_0]}$ is (up to a multiplicative factor)  a fractional Brownian motion with Hurst parameter $h$.   Condition~\eqref{eq:assmp:v new}  is satisfied, e.g., with 
$$\eps:=2h_0-1\qquad
\text{and} 
\qquad C:=\frac{\Gamma(2-2h^0)(1\vee\gamma)}{2h_0(2h_0-1)}(1\vee t_0)^2,
$$
where $\vee$ stands for maximum. Condition~\eqref{eq:cond nu case 1 new} is satisfied with any $\epsilon\in(0, 2h_0-1)$. Hence, Assumption~\ref{assmp:m extra new}~(i) is  satisfied. Therefore, the processes 
$\left(X^N_t\right)_{t\in[0,t_0]}$ in   system~\eqref{final system}, $N\in\Nat$, considered as $\left( \myX,\BB(\myX) \right)$-valued random elements, 
converge as $N\to\infty$ in distribution to the process $(Z_t)_{t\in[0,t_0]}$, 
\begin{align*}
Z_t:=\sqrt{2\sigma C^2_\beta C_\delta \cdot A\,\frac{\Gamma(2-2H) \gamma^{2H-3}}{2H(2H-1)} } \,\, B^H_t,
\end{align*} 
where $(B^H_t)_{t\in[0,t_0]}$, given $H=h$, is a fBm with 
 Hurst parameter $h$. 
Thus, it is natural to call $(Z_t)_{t\in[0,t_0]}$ a {\it randomly scaled (or, superstatistical) fractional Brownian motion with random Hurst parameter}. Note that the 
{\it random scaling}
$$
\sqrt{2\sigma C^2_\beta C_\delta \cdot A\,\frac{\Gamma(2-2H) \gamma^{2H-3}}{2H(2H-1)} } 
$$
reflects both the individual inhomogeneity of the diffusing test-particles (via the random variable $A$) and the local 
ensemble heterogeneity of the 
surround-particles embodying the environment (via the random variable $H$). 
We remind that model based on the fBm with random Hurst parameter only has been considered in 
literature \cite{randomHurst,woszczek_etal-c-2025} 
as well as the general case
with also a random diffusion coefficient
\cite{itto_etal-jrsi-2021,lanoiselee_etal-prl-2025}.
In the case $H\equiv\const$, we are back to the model of superstatistical fBm.

\bigskip

\noindent \textbf{(ii) A randomly scaled fBm-like model with time-dependent random Hurst parameter:} Let $H=(H_1,\ldots,H_K)$ be an $\cR^K$-valued random variable, whose coordinates $H_k$, $k=1,\ldots,K$, take values in  the interval $[h_0,h^0]$, where $1/2<h_0<h^0<1$. For each value $h=(h_1,\ldots,h_K)$ of $H$ let
$$
\nu_h(dy):=\1_{(0,\infty)}(y)\sum_{k=1}^K y^{-2h_k}\,dy.
$$
We define $\mu_N(h,\cdot)$ in accordance with~\eqref{eq:muN new}. Then 
$
\Phi_h(\lambda)= \sum_{k=1}^K \frac{\Gamma(2-2h_k)}{2h_k-1} \lambda^{2h_k-1},
$
and we find the variance function
 $$
 v_h(t)=\sum_{k=1}^K \frac{\Gamma(2-2h_k) \gamma^{2h_k-1}}{2h_k(2h_k-1)} t^{2h_k}.
 $$
Hence,  given $H=h$, the limiting process $\left(G^{(h)}_t\right)_{t\in[0,t_0]}$  is a sum of  independent fBm's  with Hurst parameters $h_1,\ldots,h_K$ (up to  multiplicative factors).  Similar to the case~(i) above, condition~\eqref{eq:assmp:v new}  is satisfied, e.g., with 
$$\eps:=2h_0-1\qquad
\text{and} 
\qquad C:=K\frac{\Gamma(2-2h^0)(1\vee\gamma)}{2h_0(2h_0-1)}(1\vee t_0)^2,
$$
condition~\eqref{eq:cond nu case 1 new} is satisfied with any $\epsilon\in(0, 2h_0-1)$. Hence, Assumption~\ref{assmp:m extra new}~(i) is  satisfied. Therefore, the processes 
$\left(X^N_t\right)_{t\in[0,t_0]}$ in   system~\eqref{final system}, $N\in\Nat$, considered as $\left( \myX,\BB(\myX) \right)$-valued random elements, 
converge as $N\to\infty$ in distribution to the process $(Z_t)_{t\in[0,t_0]}$, 
\begin{align*}
Z_t:=\sqrt{2\sigma C^2_\beta C_\delta A}\,\cdot\,\sum_{k=1}^K\sqrt{\frac{\Gamma(2-2H_k) \gamma^{2H_k-3}}{2H_k(2H_k-1)}}  \,\, B^{H_k}_t,
\end{align*} 
where, given $H=h$, $(B^{H_k}_t)_{t\in[0,t_0]}$, $k=1,\ldots,K$, are independent  fractional Brownian motions with  Hurst parameters $h_k$ respectively.  
Note that we do not require neither independence, nor identical distribution of random variables  $H_1,\ldots,H_K$. 
Since our $G$, 
$$
G(h,t,\omega)\equiv G^{(h)}_t(\omega)=\sqrt{2\sigma C^2_\beta C_\delta }\,\cdot\,\sum_{k=1}^K\sqrt{\frac{\Gamma(2-2h_k) \gamma^{2h_k-3}}{2h_k(2h_k-1)}}  \,\, B^{h_k}_t(\omega),
$$ 
is a centered Gaussian field, independent of $A$ and $H$,
conditional variance of the process $(Z_t)_{t\in[0,t_0]}$, given $H$, has the following view:
\begin{align}\label{eq:cool cond variance}
  \Var(Z_t\,|\,H) = \E[Z^2_t\,|\,H]=2\sigma C^2_\beta C_\delta\E[A]\sum_{k=1}^K\frac{\Gamma(2-2H_k) \gamma^{2H_k-3}}{2H_k(2H_k-1)} t^{H_k},
\end{align}
and the total variance of the process $(Z_t)_{t\in[0,t_0]}$ is
\begin{align*}
   \Var(Z_t)&=\E[\Var(Z_t\,|\,H)]=2\sigma C^2_\beta C_\delta\gamma^{-2}\,\E[A]\int_{[h_0,h^0]^K}v_h(t)\mathcal{P}_H(dh)\\
   &
   =2\sigma C^2_\beta C_\delta\E[A]\sum_{k=1}^K\int_{h_0}^{h^0}\frac{\Gamma(2-2h_k) \gamma^{2h_k-3}}{2h_k(2h_k-1)} t^{h_k}\mathcal{P}_{H_k}(dh_k),
\end{align*}
where $\mathcal{P}_{H_k}$ is the marginal distribution of the coordinate $H_k$ of the random vector $H$. Therefore, the variance of the process $(Z_t)_{t\in[0,t_0]}$ has a complicate dependence on time $t$. If we interpret  $H$ as a description of non-uniform, site-dependent ensemble heterogeneity of the environment of the test-particle,  we may understand conditional variance $\Var(Z_t\,|\,H=h)$ of the process $(Z_t)_{t\in[0,t_0]}$, given $H=h$, as  variance of a bunch of paths of test-particles  situated in a patch of space which is characterized by the value $h=(h_1,\ldots,h_K)$ of the random vector $H$. So, locally, in this patch, behavior of  test-particles is similar to the one of the process in Example~2.1~(ii) (up to a random scaling). In particular, anomalous exponent changes with time from $2\min\{h_1,\ldots,h_K\}$ for small $t$ to $2\max\{h_1,\ldots,h_K\}$ for large $t$. In different patches of the space, corresponding to different values $h$, anomalous exponent may again vary with time and may take other values. So, this model of anomalous diffusion incorporates time-dependent random Hurst parameter (cf.~\cite{balcerek_etal-prl-2025}).
\end{example}

\begin{remark}
Other examples of Secion~\ref{sec:statement} can be  lifted to the setting of Section~\ref{SuperstatFBM} in a similar  way.
\end{remark}

\begin{remark}[\textbf{Kolmogorov--Fokker--Planck equations}]
Let $u_0\in S(\cR)$, where $S(\cR)$ is the Schwarz space of test functions. Consider
$$
u(t,x):=\E\left[ u_0\left( x+Z_t(A,H) \right) \right],\qquad t\in[0,t_0],\quad x\in\cR,
$$
where $\left( Z_t(A,H) \right)_{t\in[0,t_0]}$ is the process as in Theorem~\ref{Thm2}. Let $D:= \frac{2\sigma C^2_\beta C_\delta}{\gamma^2 }$. Then
$$
 \Var(Z_t(A,H)\,|\, A=a,\,H=h)=aD v_h(t).
$$
 Applying to $u(t,x)$ the Fourier transform $\FF$ with respect to the space variable $x$, we obtain by the Fubini Theorem
\begin{align*}
\FF[u(t,\cdot)](p)&=\int_\cR e^{-ipx}\E\left[ u_0\left( x+Z_t(A,H) \right) \right]dx\\
&
=\int_0^1\pp_H(dh)\left[  \int_0^\infty\pp_A(da)\left[  \int_\cR\E\left[ e^{-ipx}u_0(x+Z_t(a,h) \right] dx\right]\right]\\
&
=\int_0^1\pp_H(dh)\left[  \int_0^\infty\pp_A(da)\left[  \int_\cR\E\left[ e^{-ip(y-Z_t(a,h))}u_0(y) \right] dy\right]\right]\\
&
=\E\left[e^{-ADv_H(t)\frac{p^2}{2}} \right]\FF[u_0](p).
\end{align*}
Therefore, $u(t,x)=\Psi_{A,H,t}\,u_0(x)$, where $\Psi_{A,H,t}$ is a pseudo-differential operator with symbol $\E\left[\exp\left\{  -ADv_H(t)\frac{p^2}{2} \right\}\right]$. Under  suitable assumptions
on   $v_h$,
  we get
\begin{align*}
\FF[u(t,\cdot)](p)&=\FF[u_0](p)+\int_0^t \left(\frac{\pd}{\pd s}\FF[u(s,\cdot)](p)\right)ds\\
&
=\FF[u_0](p)+\int_0^t\left( \E\left[\left(-\frac{p^2}{2} AD \dot{v}_H(s) \right) e^{-ADv_H(s)\frac{p^2}{2}} \right]\FF[u_0](p)\right)ds\\
&
=\FF[u_0](p)+\int_0^t\left( \frac{\E\left[\left(-\frac{p^2}{2} AD \dot{v}_H(s) \right) e^{-ADv_H(s)\frac{p^2}{2}}\right] }{\E\left[e^{-ADv_H(s)\frac{p^2}{2}} \right]} \FF[u(s,\cdot)](p)\right)ds.
\end{align*}
Therefore, $u(t,x)=\E\left[ u_0\left( x+Z_t(A,H) \right) \right]$ solves the evolution equation
\begin{align*}
u(t,x)=u_0(x)+\int_0^t \mathcal{K}_{A,H}(s,-\Delta/2) u(s,x)ds,
\end{align*}
where $\mathcal{K}_{A,H}(s,-\Delta/2)$ is a pseudo-differential operator with symbol
\begin{align*}
\mathcal{K}_{A,H}(s,p^2/2):&=\frac{\E\left[\left(-\frac{p^2}{2} AD \dot{v}_H(s) \right) e^{-ADv_H(s)\frac{p^2}{2}}\right] }{\E\left[e^{-ADv_H(s)\frac{p^2}{2}} \right]}\\
&
=\frac{D\frac{p^2}{2}\int_0^1 \dot{v}_h(s) \mathcal{L}'[A]\left(Dv_h(s)\frac{p^2}{2}  \right)\pp_H(dh)}{\int_0^1  \mathcal{L}[A]\left(Dv_h(s)\frac{p^2}{2}  \right)\pp_H(dh)}.
\end{align*}
Here $\mathcal{L}[A](\lambda):=\E\left[ e^{-\lambda A} \right]$ is the Laplace transform of the distribution of $A$, and $\mathcal{L}'[A]$ is its derivative. Such evolution equations and their solutions have been considered in~\cite{Yana-Merten} in the special case $H\equiv\const$. For some particular choices of the distribution of $A$,   the operator $\mathcal{K}_{A,H}(s,-\Delta/2)$ coincides with some known operators of fractional calculus (see examples in Sec.~6 of~\cite{Yana-Merten} for further details).
\end{remark}

	\section{Proof of Theorem~\ref{Thm}}\label{Proofs}

	Let us start with some preparatory results. In the sequel, we use the constant $C>0$ which may change from line to line but is always independent of $N,k,M, t$ and any other time indices. We denote by $\E[\cdot\,|\,\M]$ the expectation given $\M$.
	\begin{lemma}\label{lemma:preliminary}
		Under Assumptions of Theorem~\ref{Thm}~(i) consider the Ornstein-Uhlenbeck processes given by~\eqref{eq:OUprocess}. Then we have
		\begin{align}
			&\E\left[\myU^{k,N}_t\myU^{k,N}_s\,|\,\M\right]= \frac{\sigma}{\gamma m_{k,N}}e^{-\gamma m_{k,N}|t-s|}.\label{eq:ExpCondOU}
			\end{align}
	\end{lemma}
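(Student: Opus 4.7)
My plan is to recognize that the initial condition $u_0^{k,N}$ has been chosen in Assumption~\ref{assmp:u0} precisely to be the stationary distribution of the Ornstein--Uhlenbeck process $\myU^{k,N}$, so the conditional covariance will come out to the usual stationary OU covariance. Working conditionally on $\M$, the quantities $m_{k,N}$ become constants, $u_0^{k,N}$ is $\mathcal{N}(0, \sigma/(\gamma m_{k,N}))$, and $u_0^{k,N}$ is independent of the Wiener process $W^k$.

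The concrete steps are as follows. Without loss of generality take $s \leq t$. Substituting the definition \eqref{eq:OUprocess} gives four terms,
\begin{align*}
\E[\myU^{k,N}_t \myU^{k,N}_s \mid \M]
&= e^{-\gamma m_{k,N}(t+s)} \E[(u_0^{k,N})^2 \mid \M]
+ 2\sigma\, e^{-\gamma m_{k,N}(t+s)} \E\!\left[\int_0^t e^{\gamma m_{k,N} r} dW^k_r \int_0^s e^{\gamma m_{k,N} r} dW^k_r \,\Big|\, \M\right] \\
&\quad + \text{(two cross terms)}.
\end{align*}
The two cross terms vanish because $u_0^{k,N}$ is mean-zero and independent of $W^k$ given $\M$. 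The first term equals $\frac{\sigma}{\gamma m_{k,N}} e^{-\gamma m_{k,N}(t+s)}$ by Assumption~\ref{assmp:u0}. For the second term I would apply the It\^o isometry, which gives
\begin{align*}
2\sigma\, e^{-\gamma m_{k,N}(t+s)} \int_0^s e^{2\gamma m_{k,N} r}\,dr
= \frac{\sigma}{\gamma m_{k,N}}\bigl(e^{-\gamma m_{k,N}(t-s)} - e^{-\gamma m_{k,N}(t+s)}\bigr).
\end{align*}
Adding the two surviving contributions, the $e^{-\gamma m_{k,N}(t+s)}$ pieces cancel and one is left with $\frac{\sigma}{\gamma m_{k,N}} e^{-\gamma m_{k,N}(t-s)}$. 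Since $t \geq s$ implies $t-s = |t-s|$, and the roles of $t,s$ may be swapped, this establishes~\eqref{eq:ExpCondOU}.

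There is no serious obstacle here: the computation is the classical stationary covariance of an OU process, and the only point worth emphasizing is that Assumption~\ref{assmp:u0} has been tailored so that the initial variance exactly matches the stationary variance $\sigma/(\gamma m_{k,N})$, which is what makes the $e^{-\gamma m_{k,N}(t+s)}$ transient terms cancel. The independence statements in Assumptions~\ref{asmp:m}~(ii) and~\ref{assmp:u0} justify treating $m_{k,N}$ as a constant under the conditional expectation and treating $u_0^{k,N}$ as independent of $W^k$ after conditioning on $\M$.
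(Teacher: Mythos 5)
Your proof is correct and follows essentially the same route as the paper: expand the product of the two OU representations, drop the cross terms by independence and mean-zero of $u_0^{k,N}$, evaluate the stochastic-integral term via the It\^o isometry over $[0,t\wedge s]$, and observe that the stationary initial variance $\sigma/(\gamma m_{k,N})$ makes the $e^{-\gamma m_{k,N}(t+s)}$ contributions cancel. No differences worth noting.
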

	
	\begin{proof}
		Due to independence of $u_0^{k,N}$ and  the Wiener process $W^k$,   we have
		\begin{align*}
			&\E\left[ \myU^{k,N}_t\myU^{k,N}_s\,|\,\M\right]\\
			&
			=\E[(u_0^{k,N})^2\,|\,\M]e^{-\gamma m_{k,N}(t+s)}+2\sigma e^{-\gamma m_{k,N}(t+s)} \E\left[\left(\int_0^{t\wedge s} e^{\gamma m_{k,N}\tau}dW^k_\tau\right)^2\big|\M\right]\\
			&
			=\frac{\sigma}{\gamma m_{k,N}}e^{-\gamma m_{k,N}(t+s)}+2\sigma e^{-\gamma m_{k,N}(t+s)}\int_0^{t\wedge s} e^{2\gamma m_{k,N}\tau}d\tau\\
			&
			=\frac{\sigma}{\gamma m_{k,N}}e^{-\gamma m_{k,N}|t-s|}.
		\end{align*}
		\end{proof}

	\subsection{ Step 1: Elimination of the cross-interaction terms in equations for velocities of surrounding particles}

	\begin{lemma}\label{lemma1}
		Fix any $t_0>0$. Let $(X^N_t)_{t\in[0,t_0]}$ be the solution of  system~\eqref{final system} and $(\MyX^N_t)_{t\in[0,t_0]}$ be the process given in formula~\eqref{eq:MyX}. Under Assumptions~\ref{asmp:coef},~\ref{asmp:m},~\ref{assmp:u0},~\ref{assmp:coef2}, we have with a suitable constant $C>0$
		\begin{align}\label{eq:tilda is OK}
			\E\left[\sup\limits_{t\in[0,t_0]} \left| X^N_t -  \widetilde{X}^N_t  \right|^2   \right]\leq C v(t_0)t_0^2N^{-2(b-d)}\exp\left( C N^{-2(b-d) t_0^2} \right)\longrightarrow 0,
		\end{align} 
         as $N\to\infty$.
	\end{lemma}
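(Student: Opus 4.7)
The plan is to establish the bound via a Volterra-integral representation of $V^N-\widetilde V^N$ combined with Gronwall's inequality, exploiting the smallness of the back-reaction of the test-particle velocity onto the surround-particle velocities.

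Since $X^N_t-\widetilde X^N_t=\int_0^t(V^N_s-\widetilde V^N_s)\,ds$, Cauchy--Schwarz gives
\begin{align*}
\sup_{t\in[0,t_0]}|X^N_t-\widetilde X^N_t|^2\leq t_0\int_0^{t_0}|V^N_s-\widetilde V^N_s|^2\,ds,
\end{align*}
reducing the claim to an estimate on $\int_0^{t_0}\E[(V^N_s-\widetilde V^N_s)^2]\,ds$. By variation of constants applied to the $U^{k,N}$-equation, $U^{k,N}_t-\widetilde U^{k,N}_t=\frac{\alpha_{k,N}}{m_{k,N}}\int_0^t e^{-\gamma m_{k,N}(t-r)}V^N_r\,dr$. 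Substituting this into the integral representation of the $V$-equation and applying Fubini yields the linear Volterra identity
\begin{align*}
V^N_t-\widetilde V^N_t=\int_0^t K_N(r,t)\,V^N_r\,dr,\qquad K_N(r,t):=\frac{1}{M}\int_r^t e^{-\frac{\mathcal A_N}{M}(t-s)}\sum_{k=1}^N\frac{\beta_{k,N}\alpha_{k,N}}{m_{k,N}}e^{-\gamma m_{k,N}(s-r)}\,ds,
\end{align*}
with $\mathcal A_N=\sum_k\alpha_{k,N}\simeq C_\alpha N^{1-a}$.

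Using $e^{-\gamma m_{k,N}(s-r)}\leq 1$, $\int_r^t e^{-\frac{\mathcal A_N}{M}(t-s)}\,ds\leq M/\mathcal A_N$, and the deterministic lower bound $m_{k,N}\geq m^N_{min}\simeq N^{-d}$ from Assumption~\ref{asmp:m}, together with the orders from Assumption~\ref{asmp:coef}, one obtains the pathwise uniform kernel bound
\begin{align*}
|K_N(r,t)|\leq\frac{1}{\mathcal A_N\,m^N_{min}}\sum_{k=1}^N\beta_{k,N}\alpha_{k,N}\leq C\,N^{d-b},\qquad 0\leq r\leq t\leq t_0,
\end{align*}
where $b>d$ from Assumption~\ref{assmp:coef2} is decisive in making $N^{d-b}\to 0$. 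Writing $V^N_t=\widetilde V^N_t+\int_0^tK_N(r,t)V^N_r\,dr$, applying $(x+y)^2\leq 2x^2+2y^2$ and Cauchy--Schwarz, and taking expectations leads to
\begin{align*}
\E[(V^N_t)^2]\leq 2\E[(\widetilde V^N_t)^2]+Ct_0N^{2(d-b)}\int_0^t\E[(V^N_r)^2]\,dr,
\end{align*}
to which Gronwall's lemma gives $\E[(V^N_t)^2]\leq C\sup_{s\leq t_0}\E[(\widetilde V^N_s)^2]\exp(Ct_0^2N^{2(d-b)})$.

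Inserting this into the Volterra identity via $|V^N_t-\widetilde V^N_t|^2\leq t\int_0^tK_N(r,t)^2(V^N_r)^2\,dr$, integrating in $t$ and combining with the reduction above gives
\begin{align*}
\E\left[\sup_{t\leq t_0}|X^N_t-\widetilde X^N_t|^2\right]\leq Ct_0^2N^{-2(b-d)}\sup_{s\leq t_0}\E[(\widetilde V^N_s)^2]\exp\big(Ct_0^2N^{-2(b-d)}\big).
\end{align*}
To bound $\sup_{s\leq t_0}\E[(\widetilde V^N_s)^2]$ by a multiple of $v(t_0)$, I would compute $\E[(\widetilde V^N_s)^2\mid\mathcal M]$ explicitly using Lemma~\ref{lemma:preliminary} and the conditional independence of the $\widetilde U^{k,N}$'s given $\mathcal M$, estimate the resulting double integral of $e^{-\mathcal A_N(2s-s_1-s_2)/M}e^{-\gamma m_{k,N}|s_1-s_2|}$ via the identity $\int_0^\infty\!\int_0^\infty e^{-a(\tau+\tau')-b|\tau-\tau'|}d\tau d\tau'=1/(a(a+b))$, and then integrate against $\mu_N$ using $\ee_N(t)/m^*_N\uparrow\dot v(t)$ from Assumption~\ref{asmp:m}(i) together with the scaling $2(a-b)-\delta=1$ from Assumption~\ref{assmp:coef2}. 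Putting everything together yields the claim.

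The main obstacle is this last step: the naive estimate of the double integral by $s^2$ discards all exponential decay and produces a quantity that diverges in $N$, so one must use both exponentials simultaneously and couple the resulting bound with the precise scaling $2(a-b)-\delta=1$ to extract the correct variance order. A secondary subtlety is that $K_N(r,t)$ depends on both arguments, so the Gronwall step relies on the uniform-in-$(r,t)$ estimate $|K_N|\leq CN^{d-b}$ rather than a sharper $L^2$-in-$r$ bound.
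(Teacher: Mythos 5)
Your reduction to a Volterra equation for the velocities, the kernel bound $|K_N(r,t)|\leq CN^{d-b}$ (which is exactly the paper's weight $w_\tau$), and the Gr\"onwall step all match the paper's argument. The gap is in how you estimate the inhomogeneous term. By applying Cauchy--Schwarz in time \emph{before} taking expectations, i.e.\ $\bigl|\int_0^t K_N(r,t)V^N_r\,dr\bigr|^2\leq t\int_0^t K_N(r,t)^2(V^N_r)^2\,dr$, you reduce everything to the pointwise second moment $\sup_{s\leq t_0}\E[(\MyV^N_s)^2]$, and your plan requires this to be $O(v(t_0))$. That bound is false: the limit process $Z$ has non-differentiable paths, so its ``velocity'' cannot have bounded variance, and indeed a direct computation with Lemma~\ref{lemma:preliminary} gives
\begin{align*}
\E[(\MyV^N_s)^2]\asymp \frac{\sigma}{\gamma M\myA_N}\sum_{k=1}^N\beta_{k,N}^2\,\E\Bigl[\frac{1}{m_{k,N}\bigl(\tfrac{\myA_N}{M}+\gamma m_{k,N}\bigr)}\Bigr]\asymp N^{1-a}\,\frac{\ee_N(M/\myA_N)}{m^*_N},
\end{align*}
which in the fBm case of Example~\ref{ex:case1}(i) is of order $N^{2(1-a)(1-H)}\to\infty$. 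Even the sharp identity $\int_0^\infty\!\int_0^\infty e^{-a(\tau+\tau')-b|\tau-\tau'|}d\tau\,d\tau'=1/(a(a+b))$ that you propose does not prevent this divergence. Consequently your final bound is $Ct_0^2N^{-2(b-d)}\sup_s\E[(\MyV^N_s)^2]$, which neither reproduces the claimed rate $Cv(t_0)t_0^2N^{-2(b-d)}$ nor even tends to zero under Assumption~\ref{assmp:coef2} alone (e.g.\ $b$ small and $H$ near $1/2$ in Example~\ref{ex:case1}(i) violate $(1-a)(1-H)<b-d$).

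The fix, which is what the paper does, is to keep the inhomogeneous term as the time integral $\int_0^t \MyV^N_\tau w_\tau\,d\tau$ (equivalently, to run Gr\"onwall on $\E[|V^N_t-\MyV^N_t|^2]$ rather than on $\E[(V^N_t)^2]$) and to expand its second moment as a \emph{double} time integral of the Ornstein--Uhlenbeck covariance. The inner integral $\int_0^t\int_0^\tau m^{-1}e^{-\gamma m(\tau-\rho)}\,d\rho\,d\tau=\gamma^{-1}\int_0^t m^{-2}(1-e^{-\gamma m\tau})\,d\tau$ has expectation $\gamma^{-1}\int_0^t\ee_N(\tau)\,d\tau\leq \gamma^{-1}m^*_N v(t)$ by Assumption~\ref{asmp:m}(i); combined with the scaling $2(a-b)-\delta=1$ this yields $\E\bigl[\bigl(\int_0^t \MyV^N_\tau w_\tau\,d\tau\bigr)^2\bigr]\leq CN^{-2(b-d)}v(t)$, which is the source of the factor $v(t_0)$ in the statement. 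The decorrelation encoded in $e^{-\gamma m|\tau-\rho|}$ must be exploited inside the time double integral, not only inside the pointwise variance.
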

	
	\begin{proof}
		We first estimate $|V^N_t-\MyV^N_t|$ for $(V^N_t)_{t\in[0,t_0]}$ and $(\MyV^N_t)_{t\in[0,t_0]}$ as in system~\eqref{final system} and formula~\eqref{eq:MyX} respectively. For this aim, we consider the following weights
		\begin{align}
			&w_\tau:=\frac1M \sum_{k=1}^N\frac{\beta_{k,N}\alpha_{k,N}}{m_{k,N}}\int_\tau^t e^{-\frac{\myA_N}{M}(t-s)}e^{-\gamma m_{k,N}(s-\tau)}ds\nonumber\\
			&
			\quad\leq \frac{1}{\myA_N}\sum_{k=1}^N\frac{\beta_{k,N}\alpha_{k,N}}{m_{k,N}}\leq C N^{-(b-d)};\label{wTau}\\
			&
			\overline{w}_\rho:=\int_\rho^t e^{-\frac{\myA_N}{M}(\tau-\rho)}w_\tau d\tau\leq C\frac{M}{\myA_N} N^{-(b-d)}.\label{wRho}
		\end{align}
		Further, for the Ornstein-Uhlenbeck process $(\myU^{k,N}_t)_{t\in[0,t_0]}$  and the process $(U^{k,N}_t)_{t\in[0,t_0]}$ as in system~\eqref{final system}, we have
		\begin{align*}
			d\left( U^{k,N}_t-\myU^{k,N}_t \right)=-\gamma m_{k,N}\left(  U^{k,N}_t-\myU^{k,N}_t \right)dt+\frac{\alpha_{k,N}}{m_{k,N}}V^N_t dt.
		\end{align*}
		And hence
		\begin{align*}
			U^{k,N}_t-\myU^{k,N}_t=\int_0^t\frac{\alpha_{k,N}}{m_{k,N}}e^{-\gamma m_{k,N}(t-\tau)}V^{N}_\tau d\tau. 
		\end{align*}
		Therefore, using the Fubini Theorem,
		\begin{align}\label{CB3}
			&V^N_t-\MyV^N_t=\frac1M\sum_{k=1}^N\beta_{k,N}\int_0^t e^{-\frac{\myA_N}{M}(t-s)}\left(U^{k,N}_s-\myU^{k,N}_s\right) ds\nonumber\\
			&
			=\frac1M\sum_{k=1}^N\beta_{k,N}\int_0^t e^{-\frac{\myA_N}{M}(t-s)}\left( \int_0^s\frac{\alpha_{k,N}}{m_{k,N}}e^{-\gamma m_{k,N}(s-\tau)}V^{N}_\tau d\tau \right)ds\nonumber\\
			&
			=\int_0^t V^N_\tau \left( \frac1M \sum_{k=1}^N\frac{\beta_{k,N}\alpha_{k,N}}{m_{k,N}}\int_\tau^t e^{-\frac{\myA_N}{M}(t-s)}e^{-\gamma m_{k,N}(s-\tau)}ds \right)d\tau\nonumber\\
			&
			=\int_0^t \left( V^N_\tau-\MyV^N_\tau \right)w_\tau d\tau+\int_0^t  \MyV^N_\tau  w_\tau d\tau.
		\end{align}
		Moreover, again using the Fubini Theorem,
		\begin{align}\label{CB4}
			&\int_0^t  \MyV^N_\tau  w_\tau d\tau = \int_0^t\left( \frac1M\sum_{k=1}^N \beta_{k,N}\int_0^\tau  e^{-\frac{\myA_N}{M}(\tau-\rho)}{ \myU^{k,N}_\rho} d\rho \right)w_\tau d\tau\nonumber\\
			&
			=\frac1M\sum_{k=1}^N \beta_{k,N}\int_0^t \myU^{k,N}_s\left(\int_\rho^te^{-\frac{\myA_N}{M}(\tau-\rho)}w_\tau d\tau  \right)d\rho\nonumber\\
			&
			=\frac1M\sum_{k=1}^N \beta_{k,N}\int_0^t \myU^{k,N}_\rho\overline{w}_\rho d\rho.
		\end{align}
		Therefore,
		\begin{align}\label{CB5}
			&\E\left[\left( \int_0^t  \MyV^N_\tau  w_\tau d\tau \right)^2\,\big|\,\M  \right]\nonumber\\
			&
			=\frac{1}{M^2}\sum_{k=1}^N\beta_{k,N}^2\int_0^t\int_0^t  \E\left[ \myU^{k,N}_\rho\myU^{k,N}_\tau\,|\,\M\right] \overline{w}_\rho \overline{w}_\tau d\rho d\tau\nonumber\\
			&
			= \frac{C}{M^2}\sum_{k=1}^N\beta_{k,N}^2\int_0^t\int_0^t \frac{ \overline{w}_\rho \overline{w}_\tau}{m_{k,N}}e^{-\gamma m_{k,N}|\rho-\tau|} d\rho d\tau\nonumber\\
			&
			\leq C N^{-2(b-d)}\frac{1}{\myA_N^2}\sum_{k=1}^N\beta_{k,N}^2\int_0^t\int_0^t \frac{ 1}{m_{k,N}}e^{-\gamma m_{k,N}|\rho-\tau|} d\rho d\tau.
		\end{align}
		Since $ 2(a-b)-1-\delta=0$ by Assumption~\ref{assmp:coef2},
		\begin{align}\label{CB6}
			&\E\left[\left( \int_0^t  \MyV^N_\tau  w_\tau d\tau \right)^2\right]\nonumber\\
			&
			\leq C N^{-2(b-d)}\frac{2}{\myA_N^2}\sum_{k=1}^N\beta_{k,N}^2\int_0^t\int_0^\tau\E\left[ \frac{ 1}{m_{k,N}}e^{-\gamma m_{k,N}(\tau-\rho)}\right] d\rho d\tau\nonumber\\
			&\leq 
			 C N^{-2(b-d)}\frac{2}{\gamma \myA_N^2}\sum_{k=1}^N\beta_{k,N}^2\int_0^t\E\left[ \frac{ 1}{ m_{k,N}^2}\left(1-e^{-\gamma m_{k,N}\tau}\right)\right] d\tau \nonumber \\ &= C N^{-2(b-d)}\frac{2}{\gamma \myA_N^2}\sum_{k=1}^N\beta_{k,N}^2 \int_0^t \ee_N(\tau)d\tau  \nonumber \\
			&
			\leq C N^{-2(b-d)}N^{-2+2a-2b+1-\delta}v(t)=C N^{-2(b-d)} v(t).
		\end{align}
		Combining~\eqref{wTau},~\eqref{CB3} and~\eqref{CB6} yields by the Fubini theorem and by the inequality $ab\leq (a^2+b^2)/2$ for any positive $a,b$
		\begin{align*}
			&\E\left[|V^N_t-\MyV^N_t|^2  \right]\leq 2\E\left[ \left| \int_0^t(V^N_\tau-\MyV^N_\tau)w_\tau d\tau  \right|^2 \right]+2\E\left[ \left|   \int_0^t \MyV^N_\tau w_\tau d\tau\right|^2 \right]\\
			&
			\leq C N^{-2(b-d)}{   t\int_0^t \E\left[ \left| V^N_\tau-\MyV^N_\tau \right|^2 \right]d\tau} +Cv(t)N^{-2(b-d)}.
		\end{align*}
Since $v$ is nondecreasing,	we have by Gr\"onwall's inequality, for every fixed $t_0>0$ and every $t\in[0,t_0]$
		\begin{align*}
			&\E\left[|V^N_t-\MyV^N_t|^2  \right]\leq Cv(t_0)N^{-2(b-d)}\exp\left( C N^{-2(b-d)}t_0^2 \right).
		\end{align*}
		Therefore, for every $t_0>0$
		\begin{align}\label{CB7}
			\sup\limits_{t\in[0,t_0]}\E\left[|V^N_t-\MyV^N_t|^2  \right]\leq C v(t_0)N^{-2(b-d)}\exp\left( C N^{-2(b-d)}t_0^2 \right).
		\end{align}
		Finally,
		\begin{align*}
			&\E\left[\sup\limits_{t\in[0,t_0]}|X^N_t-\MyX^N_t|^2  \right]=\E\left[\sup\limits_{t\in[0,t_0]}\left|\int_0^t ( V^N_s-\MyV^N_s) ds\right|^2  \right]\\
			&
			\leq t_0\int_0^{t_0} \E\left[|V^N_s-\MyV^N_s|^2  \right]ds\leq t^2_0\sup\limits_{s\in[0,t_0]}\E\left[|V^N_s-\MyV^N_s|^2  \right].
		\end{align*}
		Thus, by~\eqref{CB7} and Assumption~\ref{assmp:coef2},
		\begin{align*}
			\E\left[\sup\limits_{t\in[0,t_0]}|X^N_t-\MyX^N_t|^2  \right]\leq C v(t_0)t_0^2N^{-2(b-d)}\exp\left(C N^{-2(b-d)}t_0^2  \right)\longrightarrow 0,
		\end{align*}
         as $N\to\infty$.
	\end{proof}

	\subsection{ Step 2: Comparison of  $\left(\MyX^N_t\right)_{t\in[0,t_0]}$  with $\left(\MyZ^N_t\right)_{t\in[0,t_0]}$.}
	
	\begin{lemma}\label{lemma2}
		For each fixed $t_0>0$ consider $(\MyX^N_t)_{t\in[0,t_0]}$ and $(\MyZ^N_t)_{t\in[0,t_0]}$ as in formulas~\eqref{eq:MyX} and~\eqref{eq:MyZ} respectively. Under assumptions of Theorem~\ref{Thm}~(i), we have
		\begin{align*}
			\sup\limits_{t\in[0,t_0]} \E\left[|\MyX^N_t-\MyZ^N_t|^2  \right]\leq C M \dot v(t_0) N^{a-1}\longrightarrow 0,\qquad N\to\infty.
		\end{align*}
	\end{lemma}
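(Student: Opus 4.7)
My plan is to first obtain a clean representation of $\MyX^N_t-\MyZ^N_t$. I would swap the order of integration in $\MyX^N_t$ via Fubini, substitute $\rho=\tau-s$, and use
\begin{equation*}
\int_0^{t-\rho}\tfrac{1}{M}e^{-\myA_N s/M}\,ds=\tfrac{1}{\myA_N}\bigl(1-e^{-\myA_N(t-\rho)/M}\bigr)
\end{equation*}
to rewrite $\MyX^N_t$ as a single integral against $\myU^{k,N}_\rho$. Subtracting $\MyZ^N_t$ and then substituting $u=t-\rho$ would give
\begin{equation*}
\MyX^N_t-\MyZ^N_t=-\frac{1}{\myA_N}\sum_{k=1}^N\beta_{k,N}\int_0^t \myU^{k,N}_{t-u}\,e^{-\myA_N u/M}\,du,
\end{equation*}
which already exhibits the exponential decay of the kernel on the scale $M/\myA_N$ as the source of smallness.

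\textbf{$L^2$-computation via independence.} Next, since the processes $(\myU^{k,N})_{k=1,\ldots,N}$ are mean-zero and mutually independent (their masses, initial conditions, and driving Wiener processes are independent across $k$), cross terms vanish, and Lemma~\ref{lemma:preliminary} combined with the tower property would yield
\begin{equation*}
\E\bigl[(\MyX^N_t-\MyZ^N_t)^2\bigr]=\frac{\sigma}{\gamma\myA_N^2}\sum_{k=1}^N\beta_{k,N}^2\,\E\!\left[\frac{1}{m_{k,N}}\int_0^t\!\!\int_0^t e^{-\myA_N(u_1+u_2)/M}e^{-\gamma m_{k,N}|u_1-u_2|}\,du_1 du_2\right].
\end{equation*}
I would then enlarge the integration region to $[0,\infty)^2$ and use the elementary identity
\begin{equation*}
\int_0^\infty\!\!\int_0^\infty e^{-\alpha(u_1+u_2)}e^{-\beta|u_1-u_2|}\,du_1\,du_2=\frac{1}{\alpha(\alpha+\beta)},\qquad \alpha:=\myA_N/M,\ \beta:=\gamma m_{k,N},
\end{equation*}
so that the bound reduces, uniformly in $t\in[0,t_0]$, to $\tfrac{\sigma M}{\gamma\myA_N^3}\sum_k\beta_{k,N}^2\,\E\!\left[\tfrac{1}{m_{k,N}(\myA_N/M+\gamma m_{k,N})}\right]$.

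\textbf{Main obstacle: relating the mass expectation to $\dot v$.} The central difficulty will be to bound $\E\!\left[\tfrac{1}{m_{k,N}(\alpha+\gamma m_{k,N})}\right]$ by $m^*_N\dot v(t_0)/\gamma$ up to a constant; crude bounds such as $\E[1/m_{k,N}]\leq (m^N_{min})^{-1}\simeq N^d$ are too costly and would destroy the desired rate. My idea is a two-region split at $y=\alpha/\gamma$: on $\{y\leq\alpha/\gamma\}$ use $\tfrac{1}{y(\alpha+\gamma y)}\leq\tfrac{1}{y\alpha}$, and on $\{y>\alpha/\gamma\}$ use $\tfrac{1}{y(\alpha+\gamma y)}\leq\tfrac{1}{\gamma y^2}$. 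Choosing $t=1/\alpha$ in $\ee_N(1/\alpha)=\int\tfrac{1-e^{-\gamma y/\alpha}}{y^2}\mu_N(dy)$ and using $1-e^{-x}\geq x/2$ on $[0,1]$ for the first region, $1-e^{-x}\geq 1-e^{-1}$ on $[1,\infty)$ for the second, both integrals become controlled by $\ee_N(1/\alpha)\leq m^*_N\dot v(1/\alpha)\leq m^*_N\dot v(t_0)$ (using $\ee_N/m^*_N\uparrow\dot v$ and monotonicity of $\dot v$ for $N$ large enough that $1/\alpha<t_0$). Finally, plugging in $\myA_N\simeq C_\alpha N^{1-a}$, $\sum_k\beta_{k,N}^2\simeq C_\beta^2 N^{1-2b}$, $m^*_N\simeq C_\delta N^{-\delta}$, and using $\delta=2(a-b)-1$ from Assumption~\ref{assmp:coef2}, the exponent of $N$ collapses to exactly $a-1$, delivering the claimed bound $CM\dot v(t_0)N^{a-1}\to 0$.
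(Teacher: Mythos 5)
Your proposal is correct, and its first half (the Fubini/variation-of-constants rewriting that identifies the remainder $\MyX^N_t-\MyZ^N_t=-\frac{1}{\myA_N}\sum_k\beta_{k,N}\int_0^t\myU^{k,N}_\tau e^{-\frac{\myA_N}{M}(t-\tau)}d\tau$, and the reduction of its second moment to a single sum over $k$ via Lemma~\ref{lemma:preliminary}) is exactly what the paper does. Where you diverge is in estimating that second moment. The paper bounds one of the two exponential factors $e^{-\frac{\myA_N}{M}(t-\rho)}$ crudely by $1$, integrates the Ornstein--Uhlenbeck correlation in $\rho$ over the \emph{finite} interval $[0,\tau]$ to produce $\frac{1-e^{-\gamma m_{k,N}\tau}}{\gamma m_{k,N}}$, whose expectation is $\ee_N(\tau)/\gamma\leq m^*_N\dot v(t_0)/\gamma$ with no further work, and then extracts the factor $N^{a-1}$ from the surviving integral $\int_0^t e^{-\frac{\myA_N}{M}(t-\tau)}d\tau\leq M/\myA_N$. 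You instead keep both exponentials, extend the time domain to $[0,\infty)^2$, and evaluate the double integral in closed form as $\frac{1}{\alpha(\alpha+\gamma m_{k,N})}$ with $\alpha=\myA_N/M$; this creates the "main obstacle" of comparing $\E\bigl[\tfrac{1}{m_{k,N}(\alpha+\gamma m_{k,N})}\bigr]$ with $m^*_N\dot v(t_0)$, which you resolve with a two-region split at $y=\alpha/\gamma$ matched against the elementary lower bounds $1-e^{-x}\geq x/2$ on $[0,1]$ and $1-e^{-x}\geq 1-e^{-1}$ on $[1,\infty)$, so that both pieces are dominated by $\ee_N(M/\myA_N)\leq m^*_N\dot v(t_0)$ (for $N$ large enough that $M/\myA_N\leq t_0$; the finitely many remaining $N$ are absorbed into the constant). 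Both routes give the same rate $N^{a-1}$; the paper's is shorter because keeping the finite time integral makes the quantity $\ee_N$ appear automatically, while your exact evaluation is a bit more delicate but makes transparent that the smallness comes from the kernel decaying on the scale $M/\myA_N$ and that the mass distribution only enters through $\ee_N$ evaluated at that scale.
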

	
	\begin{proof}
		We have by formula~\eqref{eq:MyX} and by the Fubini theorem
		\begin{align*}
			\MyX^N_t &=\frac1M \sum_{k=1}^N\beta_{k,N}\int_0^t e^{-\frac{\myA_N}{M}s}\int_s^t \myU^{k,N}_{\tau-s} d\tau ds\\
			&
			=\frac1M \sum_{k=1}^N\beta_{k,N}\int_0^t e^{-\frac{\myA_N}{M}s}\int_0^{t-s} \myU^{k,N}_{\tau} d\tau ds\\
			&
			=\frac1M \sum_{k=1}^N\beta_{k,N}\int_0^t \myU^{k,N}_\tau \int_0^{t-\tau}e^{-\frac{\myA_N}{M}s}ds d\tau\\
			&
			=\frac{1}{\myA_N} \sum_{k=1}^N\beta_{k,N}\int_0^t \myU^{k,N}_\tau \left( 1-e^{-\frac{\myA_N}{M}(t-\tau)} \right) d\tau\\
			&
			=\MyZ^N_t-\frac{1}{\myA_N} \sum_{k=1}^N\beta_{k,N}\int_0^t \myU^{k,N}_\tau e^{-\frac{\myA_N}{M}(t-\tau)}  d\tau.
		\end{align*}
		Consider $\mathcal{R}^N_t:=\frac{1}{\myA_N} \sum_{k=1}^N\beta_{k,N}\int_0^t \myU^{k,N}_\tau e^{-\frac{\myA_N}{M}(t-\tau)}  d\tau$. Hence, by~\eqref{eq:ExpCondOU} and  Assumptions~\ref{asmp:m}, \ref{assmp:coef2},
		\begin{align*}
			&\E\left[\left| \MyX^N_t-\MyZ^N_t  \right|^2  \right]=\E\left[\left|  \mathcal{R}^N_t \right|^2  \right]\\
			&
			=\frac{1}{\myA^2_N}\sum_{k=1}^N\beta^2_{k,N}\int_0^t\int_0^t e^{-\frac{\myA_N}{M}(t-\tau)}e^{-\frac{\myA_N}{M}(t-\rho)}\E\left[ \myU^{k,N}_\tau\myU^{k,N}_\rho \right] d\rho d\tau \\
			&
			=\frac{2}{\myA^2_N}\sum_{k=1}^N\beta^2_{k,N}\int_0^t\int_0^\tau e^{-\frac{\myA_N}{M}(t-\tau)}e^{-\frac{\myA_N}{M}(t-\rho)}\E\left[ \frac{\sigma}{\gamma m_{k,N}}e^{-\gamma m_{k,N}(\rho-\tau)} \right] d\rho d\tau\\
						&
			\leq { \frac{2\sigma}{\gamma^2\myA^2_N}}\sum_{k=1}^N\beta^2_{k,N}\int_0^t e^{-\frac{\myA_N}{M}(t-\tau)} \ee_N(\tau )d\tau   \\
			&
			\leq C N^{-2+2a+1-2b-\delta} \dot v(t_0)\int_0^t e^{-\frac{\myA_N}{M}(t-\tau)} d\tau\\
			&
			\leq C \dot v(t_0) \frac{M}{\myA_N}\leq C M \dot v(t_0) N^{a-1}\longrightarrow0,\quad N\to\infty.
		\end{align*}
	\end{proof}

	\bigskip
	

	\subsection{ Step 3: Convergence to  the Gaussian Process $\left( Z_t \right)_{t\in[0,t_0]}$ }
	
	Let us introduce the following notations:
	\begin{align*}
		&\ffi^{t,s}(m):=\int_0^t\int_0^s\frac1m e^{-\gamma m|\tau-\rho|}d\tau d\rho;\\
		&
		\xi^{N,t,s}_k:=\frac{\sigma}{\gamma \myA_N^2}\beta^2_{k,N}\ffi^{t,s}(m_{k,N});\\
		&
		\eta^{N,t,s}_k:=\xi^{N,t,s}_k-\E\left[ \xi^{N,t,s}_k \right];\\
		&
		\eta^{N,t,s}:=\sum_{k=1}^N\eta^{N,t,s}_k;\\
		&
		\xi^{N,t,s}:=\sum_{k=1}^N \xi^{N,t,s}_k=\Cov(\MyZ^N_t,\MyZ^N_s\,|\,\M),
	\end{align*}
	  where the last identity is due to Lemma~\ref{lemma:preliminary} and \eqref{eq:MyZ}.
	\begin{lemma}\label{expZ}
		Under assumptions of Theorem~\ref{Thm}~(i), we have 
		\begin{align*}
			\E\left[ \xi^{N,t,s} \right]\longrightarrow \left(\frac{2\sigma C_\beta^2C_{\delta}}{\gamma^2 C_\alpha^2}\right) \frac{1}{2}\left(v(t)+v(s)-v(|t-s|) \right)  ,\qquad N\to\infty.
		\end{align*}
	\end{lemma}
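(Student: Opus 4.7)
The plan is to compute $\E[\xi^{N,t,s}]$ directly, using the i.i.d.\ structure of the masses and a judicious algebraic rearrangement of $\varphi^{t,s}$ that makes the function $\ee_N$ appear explicitly, and then pass to the limit via monotone convergence based on Assumption~\ref{asmp:m}(i).

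First, since the $m_{k,N}$ are i.i.d.\ under $\mu_N$ and $\beta_{k,N}\simeq C_\beta N^{-b}$, $\myA_N\simeq C_\alpha N^{1-a}$, we have
\[
\E[\xi^{N,t,s}] \;=\; \frac{\sigma\,N\,\beta_{1,N}^{2}}{\gamma\,\myA_N^{2}}\,\E\!\left[\ffi^{t,s}(m_{1,N})\right],
\]
and the prefactor $\frac{\sigma N\beta_{1,N}^{2}\,m_N^{*}}{\gamma \myA_N^{2}}$ behaves like $\frac{\sigma C_\beta^{2}C_\delta}{\gamma C_\alpha^{2}}\,N^{2(a-b)-\delta-1}$, which tends to $\frac{\sigma C_\beta^{2}C_\delta}{\gamma C_\alpha^{2}}$ by the identity $2(a-b)-\delta=1$ from Assumption~\ref{assmp:coef2}. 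So it remains to prove that
\[
\frac{\E[\ffi^{t,s}(m_{1,N})]}{m_N^{*}} \;\longrightarrow\; \frac{1}{\gamma}\bigl(v(t)+v(s)-v(|t-s|)\bigr).
\]

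The key algebraic step, which is the main technical point, is to express $\ffi^{t,s}(m)$ purely in terms of integrals of $\frac{1-e^{-\gamma m u}}{m^{2}}$, the integrand defining $\ee_N$. Using the translation-invariance of $|\tau-\rho|$, I would establish the identity
\[
\int_{0}^{t}\!\!\int_{0}^{s}\!e^{-\gamma m|\tau-\rho|}\,d\tau\,d\rho \;=\; \tfrac{1}{2}\bigl(I(t)+I(s)-I(|t-s|)\bigr),\qquad I(T):=\int_{0}^{T}\!\!\int_{0}^{T}\!e^{-\gamma m|\tau-\rho|}\,d\tau\,d\rho,
\]
by writing $[0,s]^{2}=[0,t]^{2}\cup([t,s]\times[0,s])\cup([0,t]\times[t,s])$ (assuming $t\le s$ WLOG) and translating the last square. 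A direct computation gives $I(T)=\frac{2T}{\gamma m}-\frac{2(1-e^{-\gamma m T})}{\gamma^{2}m^{2}}$, and the antiderivative identity $\frac{T}{\gamma m}-\frac{1-e^{-\gamma m T}}{\gamma^{2}m^{2}}=\int_{0}^{T}\frac{1-e^{-\gamma m u}}{\gamma m}\,du$ then yields
\[
\ffi^{t,s}(m) \;=\; \frac{1}{\gamma}\!\left(\int_{0}^{t}\!\frac{1-e^{-\gamma m u}}{m^{2}}\,du + \int_{0}^{s}\!\frac{1-e^{-\gamma m u}}{m^{2}}\,du - \int_{0}^{|t-s|}\!\frac{1-e^{-\gamma m u}}{m^{2}}\,du\right).
\]

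Once this representation is in hand, the remainder of the proof is routine. By Fubini's theorem applied to $\E[\ffi^{t,s}(m_{1,N})]$ with $m_{1,N}\sim \mu_N$, I obtain
\[
\E[\ffi^{t,s}(m_{1,N})] \;=\; \frac{1}{\gamma}\!\left(\int_{0}^{t}\ee_N(u)\,du + \int_{0}^{s}\ee_N(u)\,du - \int_{0}^{|t-s|}\ee_N(u)\,du\right).
\]
Dividing by $m_N^{*}$ and invoking the monotone convergence $\ee_N(u)/m_N^{*}\uparrow \dot v(u)$ from Assumption~\ref{asmp:m}(i) (together with $v(T)=\int_{0}^{T}\dot v(u)\,du$), the three integrals converge to $v(t)$, $v(s)$, $v(|t-s|)$ respectively. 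Combining with the limit of the prefactor computed above yields the claim. The only delicate point is the rearrangement identity for $\ffi^{t,s}$; everything else is bookkeeping.
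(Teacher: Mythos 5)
Your proposal is correct and follows essentially the same route as the paper: the paper obtains your key identity $\ffi^{t,s}(m)=\frac{1}{2}\bigl(\ffi^{t,t}(m)+\ffi^{s,s}(m)-\ffi^{|t-s|,|t-s|}(m)\bigr)$ probabilistically, from the stationary increments of $\bar Z^{k,N}_t=\int_0^t\myU^{k,N}_\tau d\tau$ conditionally on $\M$ together with $ab=\frac12(a^2+b^2-(a-b)^2)$, whereas you verify the same polarization identity directly on the kernel $e^{-\gamma m|\tau-\rho|}$; the explicit computation of the diagonal term as $\frac{2}{\gamma}\int_0^T\frac{1-e^{-\gamma m u}}{m^2}du$ and the passage to the limit via Fubini and monotone convergence using $\ee_N/m^*_N\uparrow\dot v$ are identical to the paper's. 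The only cosmetic caveat is that $\sum_{k=1}^N\beta_{k,N}^2$ should replace $N\beta_{1,N}^2$ (the $\beta_{k,N}$ are only asymptotically, not exactly, equal), which changes nothing in the limit.
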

	
	\begin{proof}
Let 
$$
\bar Z^{k,N}_t:=\int_0^t \widetilde U^{k,N}_\tau d\tau.
$$
Since $\widetilde U^{k,N}$ is (conditionally on $\mathcal{M}$) a stationary Ornstein-Uhlenbeck process, $\bar Z^{k,N}$ has stationary increments conditionally  on $\mathcal{M}$. Hence, 
using the   equality $ab=\frac12\left(a^2+b^2-(a-b)^2\right)$ for $a,b>0$, we obtain

\begin{align}\label{eq:cond_cov}
&\Cov(\MyZ^N_t,\MyZ^N_s\,|\,\M)=\frac{1}{\myA_N^2}\sum_{k=1}^N\beta_{k,N}^2 \E[\bar Z^{k,N}_t \bar Z^{k,N}_s|\mathcal{M}] \nonumber  \\
 &
 = \frac{1}{2\myA_N^2}\sum_{k=1}^N\left(\beta_{k,N}^2 \left(v_{k,N}(t)+v_{k,N}(s)-v_{k,N}(|t-s|) \right)\right),
\end{align}
	where $v_{k,N}(t):=\E[(\bar Z^{k,N}_t)^2|\mathcal{M}]$. Now, by Lemma~\ref{lemma:preliminary},
\begin{align*}
v_{k,N}(t)&= 2 \int_0^t \int_0^\tau  \E[\widetilde U^{k,N}_\tau\widetilde U^{k,N}_\rho|\mathcal{M}]d\rho d\tau \nonumber \\
&
=   \int_0^t \int_0^\tau  \frac{2\sigma}{\gamma m_{k,N}}e^{-\gamma m_{k,N}(\tau-\rho)} d\rho d\tau \nonumber \\
&
= \frac{2\sigma}{\gamma^2} \int_0^t \frac{1-e^{-\gamma m_{k,N}\tau}}{m_{k,N}^2}d\tau.
\end{align*}
	Taking expectation and applying Fubini's theorem and the monotone convergence theorem, we obtain
	\begin{equation}\label{eq:exp_v}
		N^\delta \E[v_{k,N}(t)]=\frac{2\sigma}{\gamma^2}  N^\delta m^*_N \int_0^t  \frac{\ee_N(\tau )}{m^*_N} d\tau \rightarrow \frac{2\sigma C_{\delta}}{\gamma^2}v(t).
	\end{equation}
In view of  \eqref{eq:cond_cov},
$$
\Cov(\MyZ^N_t,\MyZ^N_s)\rightarrow \frac{2\sigma C_\beta^2 C_\delta}{\gamma^2 C_\alpha^2}\; \frac{1}{2}\left(v(t)+v(s)-v(|t-s|) \right). 
$$
	\end{proof}
	
	\begin{lemma}\label{4th moment}
		Under assumptions of Theorem~\ref{Thm}~(i) we have
		\begin{align*}
			{ \sum_{N=1}^\infty}\E\left[|\eta^{N,t,s}|^4 \right]<\infty.
		\end{align*}
	\end{lemma}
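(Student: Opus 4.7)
The plan is to exploit that, for each fixed $N$, the summands $\eta^{N,t,s}_k$, $k=1,\ldots,N$, are i.i.d.\ and centered by construction, because they are deterministic functions of the i.i.d.\ masses $m_{k,N}$ supplied by Assumption~\ref{asmp:m}. Independence together with centeredness kills every mixed expectation $\E[\eta^{N,t,s}_{i_1}\eta^{N,t,s}_{i_2}\eta^{N,t,s}_{i_3}\eta^{N,t,s}_{i_4}]$ in which some index occurs exactly once, leaving the standard expansion
$$
\E\bigl[(\eta^{N,t,s})^4\bigr] = N\,\E\bigl[(\eta^{N,t,s}_1)^4\bigr] + 3N(N-1)\bigl(\E\bigl[(\eta^{N,t,s}_1)^2\bigr]\bigr)^2,
$$
where the factor $3$ counts the pairings $(ii)(jj)$, $(ij)(ij)$, $(ij)(ji)$.

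Next, I would control the summand moments by the crude pointwise bound
$$
0 \leq \ffi^{t,s}(m) \leq \int_0^{t_0}\!\!\int_0^{t_0} \frac{1}{m}\,d\tau\,d\rho = \frac{t_0^2}{m},
$$
which yields $|\xi^{N,t,s}_1|^p \leq C\,\beta_{1,N}^{2p}\myA_N^{-2p}m_{1,N}^{-p}$ for $p=2,4$. Since $\eta^{N,t,s}_1=\xi^{N,t,s}_1-\E[\xi^{N,t,s}_1]$, Jensen's inequality gives $\E[|\eta^{N,t,s}_1|^4]\lesssim \E[(\xi^{N,t,s}_1)^4]$ and $\E[(\eta^{N,t,s}_1)^2]\leq \E[(\xi^{N,t,s}_1)^2]$. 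Using the fourth moment hypothesis $\int y^{-4}\mu_N(dy)\lesssim N^{d'}$ from Assumption~\ref{asmp:m}, together with Cauchy--Schwarz to upgrade it to $\E[1/m_{1,N}^2]\lesssim N^{d'/2}$, and plugging in $\beta_{1,N}\simeq C_\beta N^{-b}$, $\myA_N\simeq C_\alpha N^{1-a}$, I obtain
$$
\E\bigl[(\eta^{N,t,s}_1)^4\bigr]\lesssim N^{8(a-b)-8+d'}, \qquad \E\bigl[(\eta^{N,t,s}_1)^2\bigr]\lesssim N^{4(a-b)-4+d'/2}.
$$

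Substituting these two estimates into the expansion, both terms produce the same order, and the total is
$$
\E\bigl[(\eta^{N,t,s})^4\bigr] \lesssim N\cdot N^{8(a-b)-8+d'} + N^2\cdot N^{8(a-b)-8+d'} \lesssim N^{8(a-b)-6+d'}.
$$
Assumption~\ref{assmp:coef2} precisely states $d'<5+8(b-a)=5-8(a-b)$, which forces the exponent $8(a-b)-6+d'$ to be strictly less than $-1$, ensuring summability. The main point is not any single delicate estimate but rather the bookkeeping: the crude deterministic bound $\ffi^{t,s}(m)\leq t_0^2/m$ and the raw fourth-moment hypothesis on $\mu_N$ give the correct exponent only because Assumption~\ref{assmp:coef2} has been tuned for exactly this scaling; any sharper control of $\ffi^{t,s}$ or of $\E[1/m_{1,N}^2]$ is unnecessary.
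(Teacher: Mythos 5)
Your proposal is correct and follows essentially the same route as the paper: the i.i.d.\ expansion of the fourth moment, the crude bound $\ffi^{t,s}(m)\leq t_0^2/m$, the hypothesis $\int y^{-4}\mu_N(dy)\lesssim N^{d'}$, and the condition $d'<5+8(b-a)$ to get summability. The only cosmetic difference is that you bound the term $N^2\bigl(\E[(\eta^{N,t,s}_1)^2]\bigr)^2$ via Cauchy--Schwarz on $\E[m_{1,N}^{-2}]$, whereas the paper applies Jensen to reduce everything to $N^2\,\E[|\xi^{N,t,s}_1|^4]$; both give the exponent $8(a-b)-6+d'$.
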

	\begin{proof}
		Since $\E\left[ \eta^{N,t,s}_k \right]=0$, we obtain by the i.i.d. property of masses $m_{1,N},\ldots,m_{N,N}$
		\begin{align*}
			\E\left[|\eta^{N,t,s}|^4 \right]=N\E\left[|\eta^{N,t,s}_1|^4 \right]+3N(N-1)\E\left[|\eta^{N,t,s}_1|^2 \right]^2.
		\end{align*}
		Hence
		\begin{align}\label{CB*}
			\E\left[|\eta^{N,t,s}|^4 \right]\leq 3N^2 \E\left[ \left(\xi^{N,t,s}_1-\E\left[ \xi^{N,t,s}_1 \right]\right)^4 \right]\leq C N^2\E\left[\left| \xi^{N,t,s}_1 \right|^4 \right].
		\end{align}
				Now,
		\begin{align*}
			\left| \xi^{N,t,s}_1 \right|^4 \leq C N^{-8(1+b-a)}\left| \ffi^{t,s}(m_{1,N} )\right|^4.
		\end{align*}
	Noting that
	\begin{align*}
		\ffi^{t,s}(m)=\int_0^t\int_0^s\frac1m e^{-\gamma m|\tau-\rho|}d\tau d\rho\leq \frac{ts}{m},
	\end{align*}
	we observe,
			\begin{align*}
			\E\left[ \left| \xi^{N,t,s}_1 \right|^4 \right]\leq C t_0^8 N^{-8(1+b-a)} \int_{(0,\infty)} y^{-4} \mu_N(dy)\leq C t_0^8 N^{-8(1+b-a)+d'}
		\end{align*}
		
		Therefore,
		\begin{align*}
			{ \sum_{N=1}^\infty} \E\left[|\eta^{N,t,s}|^4 \right]\leq C t_0^8 { \sum_{N=1}^\infty}  N^{-1+(8a-8b-5+d')}<\infty,
		\end{align*}
		since $8a-8b-5+d'<0$ due to Assumption~\ref{assmp:coef2}.
	\end{proof}
	
	\begin{lemma}\label{lem:Step3}
		Under assumptions of Theorem~\ref{Thm}~(i), we have $\PP$-almost surely 
		\begin{align*}
			\Cov(\MyZ^N_t,\MyZ^N_s\,|\,\M)\longrightarrow \left(\frac{2\sigma C_\beta^2C_{\delta}}{\gamma^2 C_\alpha^2}\right) \frac{1}{2}\left(v(t)+v(s)-v(|t-s|) \right) ,\qquad N\to\infty.
		\end{align*}
	\end{lemma}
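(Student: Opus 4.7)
The plan is to decompose the conditional covariance into its expectation and a centered remainder, show the expectation converges by Lemma~\ref{expZ}, and show the remainder vanishes almost surely using the fourth moment bound from Lemma~\ref{4th moment} together with a Borel--Cantelli argument.

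First, I would note that by Lemma~\ref{lemma:preliminary} and the definition~\eqref{eq:MyZ} of $\MyZ^N$,
\begin{align*}
\Cov(\MyZ^N_t,\MyZ^N_s\,|\,\M)=\xi^{N,t,s}=\E[\xi^{N,t,s}]+\eta^{N,t,s},
\end{align*}
where the equality $\xi^{N,t,s}=\sum_{k=1}^N\xi^{N,t,s}_k$ was established just before Lemma~\ref{expZ}. Lemma~\ref{expZ} immediately gives the convergence of the deterministic term $\E[\xi^{N,t,s}]$ to the target limit, so the whole task reduces to showing that $\eta^{N,t,s}\to 0$ almost surely as $N\to\infty$.

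For the convergence of $\eta^{N,t,s}$ to zero, I would invoke Lemma~\ref{4th moment}: since $\sum_{N=1}^\infty \E[|\eta^{N,t,s}|^4]<\infty$, for any fixed $\varepsilon>0$ Markov's inequality yields
\begin{align*}
\sum_{N=1}^\infty \PP\bigl(|\eta^{N,t,s}|>\varepsilon\bigr)\leq \varepsilon^{-4}\sum_{N=1}^\infty \E[|\eta^{N,t,s}|^4]<\infty,
\end{align*}
so the Borel--Cantelli lemma implies $\PP$-a.s.\ that $|\eta^{N,t,s}|\leq \varepsilon$ for all sufficiently large $N$. Taking a countable sequence $\varepsilon_j\downarrow 0$ and intersecting the corresponding full-probability events produces a single $\PP$-null set outside of which $\eta^{N,t,s}\to 0$. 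Combining this with Lemma~\ref{expZ} gives the claim.

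I do not anticipate any real obstacle here: the analytical work has already been done in Lemmas~\ref{expZ} and~\ref{4th moment}, and this final step is a clean summability-plus-Borel--Cantelli argument. The only mild point to keep track of is that the exceptional $\PP$-null set may a priori depend on $(t,s)$, but for the statement of Lemma~\ref{lem:Step3} as formulated (for fixed $t,s$) this is harmless; if almost-sure convergence simultaneously in $(t,s)$ were needed, one would additionally intersect over a countable dense set of pairs and then invoke continuity of $v$.
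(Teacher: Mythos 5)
Your proposal is correct and follows essentially the same route as the paper: the paper likewise applies Markov's inequality to the fourth-moment bound of Lemma~\ref{4th moment}, invokes the Borel--Cantelli lemma to conclude $\eta^{N,t,s}\to 0$ $\PP$-a.s., and then combines this with Lemma~\ref{expZ} via the decomposition $\Cov(\MyZ^N_t,\MyZ^N_s\,|\,\M)=\xi^{N,t,s}=\E[\xi^{N,t,s}]+\eta^{N,t,s}$. Your remark about intersecting over a countable sequence $\varepsilon_j\downarrow 0$ is a correct (and implicitly assumed) detail that the paper leaves unstated.
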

	
	\begin{proof}
		For every $\eps>0$, we have by Lemma~\ref{4th moment} and Markov's inequality
		\begin{align*}
			\sum_{N=1}^\infty \PP\left( \left\{  \left| \eta^{N,t,s} \right|>\eps \right\}  \right)\leq \frac{1}{\eps^4}  \sum_{N=1}^\infty\E\left[|\eta^{N,t,s}|^4 \right]<\infty.
		\end{align*}
		Hence, $\eta^{N,t,s}\to0$, $N\to\infty$, $\PP$-almost surely by the Borel--Cantelli Lemma. Therefore, the statement follows from  Lemma~\ref{expZ}. 
	\end{proof}
	
Let us now recall the notion of mixing convergence of random elements. 
	\begin{definition}
		Let $(\Omega,\FF,\PP)$ be a probability space.  Let $\myX$  be a separable metrizable topological space endowed with its Borel $\sigma$-field $\BB(\myX)$. Let $\GG\subset\FF$ be a sub-$\sigma$-field. A sequence $(\xi_N)_{N\in\Nat}$ of $(\myX,\BB(\myX))$-valued random elements is said to \emph{converge $\GG$-mixing} to a (random element with) probability distribution $\nu$ on $\myX$, if the conditional distributions $\mathcal{P}_{\xi_N|\GG}$ converge weakly to $\nu$ as $N\to\infty$, i.e., if for every $f\in L^1(\Omega,\FF,\PP)$ and every bounded continuous function $h$ on $\myX$
		\begin{align*}
			\lim_{N\to\infty}\E\left[ f\E\left[h(\xi_N)|\GG  \right] \right] =\E[f]\int_\myX h d\nu.
		\end{align*}
	\end{definition}
\begin{remark}\label{rem:mixing}
	Note that the  $\GG$-mixing convergence is a special case of the so called  $\GG$-stable convergence;   and $\GG$-stable convergence implies also convergence in distribution (see~\cite{zbMATH06444973} for further details).
	\end{remark}
	
	\begin{lemma}\label{lem:FinDimZ}
		Let $\sigma(\M)$ be the $\sigma$-field generated by the family $\M$ given in Assumption~\ref{asmp:m}. In the setting of Theorem~\ref{Thm}~(i),  the sequence of stochastic processes $\left((\MyZ^N_t)_{t\in[0,t_0]}\right)_{N\in\Nat}$  converges $\sigma(\M)$-mixing in finite dimensional distributions to the process  $(Z_t)_{t\geq0}$, i.e., for any $n\in\Nat$ and any $t_1,\ldots,t_n\in[0,t_0]$, the $(\cR^n,\BB(\cR^n))$-valued random elements $(\MyZ^N_{t_1},\ldots,\MyZ^N_{t_n})$ converge $\sigma(\M)$-mixing to $(Z_{t_1},\ldots,Z_{t_n})$.
	\end{lemma}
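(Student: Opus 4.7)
The plan is to exploit the conditional Gaussianity of $\widetilde{Z}^N$ given $\M$ and combine it with the a.s.\ convergence of the conditional covariances established in Lemma~\ref{lem:Step3}. First, I would fix $n\in\Nat$ and $t_1,\ldots,t_n\in[0,t_0]$, set $\mathbf{t}=(t_1,\ldots,t_n)$, and observe that conditionally on $\M$ the vector $(\MyZ^N_{t_1},\ldots,\MyZ^N_{t_n})$ is centered Gaussian. Indeed, by Assumption~\ref{assmp:u0} each $u_0^{k,N}$ is, conditionally on $\M$, $\mathcal{N}(0,\sigma/(\gamma m_{k,N}))$-distributed; the Wiener processes $W^k$ are independent of $\M$; so each Ornstein--Uhlenbeck process $\myU^{k,N}$ is, conditionally on $\M$, a centered Gaussian process, and the $\myU^{k,N}$ are conditionally independent across $k$. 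Since each $\MyZ^N_{t_j}$ is a linear functional (with $\M$-measurable coefficients) of these conditionally Gaussian processes, the full vector $(\MyZ^N_{t_1},\ldots,\MyZ^N_{t_n})$ is conditionally centered Gaussian, with (random) covariance matrix $\Sigma^N=(\xi^{N,t_i,t_j})_{1\le i,j\le n}$.

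Second, by Lemma~\ref{lem:Step3}, for every pair $(i,j)$ we have $\xi^{N,t_i,t_j}\to \Sigma_{ij}$ almost surely as $N\to\infty$, where
\[
\Sigma_{ij}:=\left(\frac{2\sigma C_\beta^2 C_{\delta}}{\gamma^2 C_\alpha^2}\right)\tfrac12\bigl(v(t_i)+v(t_j)-v(|t_i-t_j|)\bigr)=\Cov(Z_{t_i},Z_{t_j}).
\]
Hence $\Sigma^N\to\Sigma$ almost surely on a common probability-$1$ set. Consequently, for every $\lambda\in\cR^n$ the conditional characteristic function satisfies
\[
\E\!\left[\exp\!\bigl(i\langle\lambda,(\MyZ^N_{t_1},\ldots,\MyZ^N_{t_n})\rangle\bigr)\,\big|\,\M\right]
=\exp\!\bigl(-\tfrac12\lambda^\top\Sigma^N\lambda\bigr)\longrightarrow \exp\!\bigl(-\tfrac12\lambda^\top\Sigma\lambda\bigr)
\]
almost surely. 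Since the left-hand side is bounded by $1$, dominated convergence yields, for every $f\in L^1(\Omega,\FF,\PP)$,
\[
\E\!\left[f\,\exp\!\bigl(-\tfrac12\lambda^\top\Sigma^N\lambda\bigr)\right]\longrightarrow \E[f]\cdot\exp\!\bigl(-\tfrac12\lambda^\top\Sigma\lambda\bigr),
\]
which is the characteristic-function version of the $\sigma(\M)$-mixing convergence statement (the limit being deterministic because $\Sigma$ is).

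Third, to turn this into the stated convergence against bounded continuous test functions $h$, I would argue as follows. For almost every $\omega$, the conditional law $\mathcal{P}_{(\MyZ^N_{t_1},\ldots,\MyZ^N_{t_n})|\sigma(\M)}(\omega,\cdot)$ is the centered Gaussian measure with covariance $\Sigma^N(\omega)$; the a.s.\ convergence $\Sigma^N(\omega)\to\Sigma$ together with standard continuity of Gaussian measures in their covariance (or equivalently L\'evy's continuity theorem applied to the conditional characteristic functions) implies that $\mathcal{P}_{(\MyZ^N_{t_1},\ldots,\MyZ^N_{t_n})|\sigma(\M)}(\omega,\cdot)$ converges weakly, $\PP$-a.s., to the Gaussian distribution of $(Z_{t_1},\ldots,Z_{t_n})$. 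Hence for bounded continuous $h$,
\[
\E\bigl[h(\MyZ^N_{t_1},\ldots,\MyZ^N_{t_n})\,\big|\,\sigma(\M)\bigr]\longrightarrow \E\bigl[h(Z_{t_1},\ldots,Z_{t_n})\bigr]\quad \PP\text{-a.s.},
\]
and another dominated convergence step gives $\E\bigl[f\,\E[h(\MyZ^N_{\mathbf{t}})\mid\sigma(\M)]\bigr]\to \E[f]\,\E[h(Z_{\mathbf{t}})]$ for every $f\in L^1$, which is precisely $\sigma(\M)$-mixing convergence.

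The only genuinely delicate point is the justification of the \emph{conditional} Gaussianity of $\MyZ^N$ given $\M$, since the coefficients $\beta_{k,N}/\myA_N$ and the decay rates $\gamma m_{k,N}$ inside $\myU^{k,N}$ are $\sigma(\M)$-measurable while the driving noises $W^k$ and initial conditions $u_0^{k,N}$ are (by Assumptions~\ref{asmp:m} and~\ref{assmp:u0}) independent of $\M$; once this conditioning argument is written out carefully using the joint independence structure, the rest is a routine combination of Lemma~\ref{lem:Step3}, L\'evy continuity, and bounded convergence.
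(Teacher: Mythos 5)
Your argument is correct and follows essentially the same route as the paper's (much terser) proof: conditionally on $\M$ the vector $(\MyZ^N_{t_1},\ldots,\MyZ^N_{t_n})$ is centered Gaussian, Lemma~\ref{lem:Step3} gives almost sure convergence of the conditional covariances to the deterministic limit, and dominated convergence upgrades the resulting a.s.\ weak convergence of the conditional laws to $\sigma(\M)$-mixing convergence. The extra detail you supply (conditional characteristic functions, L\'evy continuity, the common null set for the finitely many pairs $(t_i,t_j)$) is exactly what the paper leaves implicit.
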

	\begin{proof}
		By Lemma~\ref{lem:Step3}, conditionally on  $\M$, we have a sequence of Gaussian stochastic processes $\left((\MyZ^N_t)_{t\in[0,t_0]}\right)_{N\in\Nat}$  with continuous paths whose mean functions and covariance functions converge to those of the process  $(Z_t)_{t\in[0,t_0]}$. Therefore, conditionally on $\M$, all finite dimensional marginal distributions of processes $(\MyZ^N_t)_{t\in[0,t_0]}$  converge to those of the process  $(Z_t)_{t\geq0}$.
	\end{proof}
	
	\begin{lemma}\label{lem:finDimDistrConv}
In the setting of Theorem~\ref{Thm}~(i),  the sequence of 
processes $\left((X^N_t)_{t\in[0,t_0]}\right)_{N\in\Nat}$  converges $\sigma(\M)$-mixing   in finite dimensional distributions to the process $(Z_t)_{t\in[0,t_0]}$.
	\end{lemma}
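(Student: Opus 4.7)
The plan is to combine the three preceding lemmas with a standard Slutsky-type stability property of mixing convergence. First, I would fix any $n \in \Nat$ and any $t_1,\ldots,t_n \in [0,t_0]$ and observe that Lemmas~\ref{lemma1} and~\ref{lemma2} together give
\begin{align*}
\E\left[\left| X^N_{t_i} - \MyZ^N_{t_i} \right|^2\right]
&\leq 2\E\left[\sup_{t\in[0,t_0]}\left| X^N_t - \MyX^N_t \right|^2\right]
+ 2\E\left[\left| \MyX^N_{t_i} - \MyZ^N_{t_i} \right|^2\right] \longrightarrow 0
\end{align*}
as $N\to\infty$, for each $i=1,\ldots,n$. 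Consequently the $\cR^n$-valued remainder $R^N := (X^N_{t_1}-\MyZ^N_{t_1},\ldots,X^N_{t_n}-\MyZ^N_{t_n})$ converges to $0$ in probability.

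Next, Lemma~\ref{lem:FinDimZ} tells me that the $\cR^n$-valued random vectors $\xi^N := (\MyZ^N_{t_1},\ldots,\MyZ^N_{t_n})$ converge $\sigma(\M)$-mixing to $\xi := (Z_{t_1},\ldots,Z_{t_n})$. I would then invoke the standard fact (see, e.g.,~\cite{zbMATH06444973} and Remark~\ref{rem:mixing}) that $\GG$-mixing (and, more generally, $\GG$-stable) convergence is preserved under additive perturbations that vanish in probability: if $\xi^N \to \xi$ $\GG$-mixing and $R^N \to 0$ in probability, then $\xi^N + R^N \to \xi$ $\GG$-mixing. Since $(X^N_{t_1},\ldots,X^N_{t_n}) = \xi^N + R^N$, the desired mixing convergence of the finite-dimensional distributions follows immediately.

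If I had to spell out the Slutsky-type fact, I would test against a bounded Lipschitz function $h$ on $\cR^n$ and any $f \in L^1(\Omega,\FF,\PP)$, write
\begin{align*}
\E\bigl[f\, h(\xi^N+R^N)\bigr] - \E\bigl[f\, h(\xi^N)\bigr]
\end{align*}
and bound it by $\|f\|_{L^1}\cdot \|h\|_{\mathrm{Lip}}\cdot\E[\,1 \wedge |R^N|\,]$ after a truncation; the first term converges to $\E[f]\int h\,d\nu$ by mixing convergence of $\xi^N$, and the difference vanishes since $R^N \to 0$ in probability. This then yields $\E[f\,h(\xi^N+R^N)] \to \E[f]\int h\,d\nu$, which, by a standard density argument, extends from bounded Lipschitz $h$ to all bounded continuous $h$.

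There is no genuine obstacle at this stage since all the analytical work lives in the three previous lemmas; the only minor point is that the finite-dimensional distributions are taken unconditionally in probability for the remainder, whereas the mixing convergence of $\MyZ^N$ is stated conditionally on $\M$. This mismatch is harmless: unconditional convergence in probability of $R^N$ is exactly what the Slutsky-type stability of stable/mixing convergence requires, so the two modes combine cleanly to give the stated $\sigma(\M)$-mixing convergence of finite-dimensional distributions of $X^N$ to those of $Z$.
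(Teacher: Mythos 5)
Your proposal is correct and follows essentially the same route as the paper: it reduces the claim to the $\sigma(\M)$-mixing convergence of $(\MyZ^N_{t_1},\ldots,\MyZ^N_{t_n})$ from Lemma~\ref{lem:FinDimZ}, shows via Lemmas~\ref{lemma1} and~\ref{lemma2} together with Markov's inequality that the remainder vanishes in probability, and then invokes the Slutsky-type stability of mixing convergence (Theorem~3.7~(a) of~\cite{zbMATH06444973}). The additional sketch of that stability fact is not needed but does no harm.
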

	
	\begin{proof}
		Since, by Lemma~\ref{lem:FinDimZ},  the sequence of processes $\left((\MyZ^N_t)_{t\in[0,t_0]}\right)_{N\in\Nat}$  converges $\sigma(\M)$-mixing in finite dimensional distributions  to the process  $(Z_t)_{t\in[0,t_0]}$, it is enough to show that for any $\eps>0$,  any $n\in\Nat$ and any $t_1,\ldots,t_n\in[0,t_0]$
		\begin{align*}
			\PP\left(\left\|(X^N_{t_1},\ldots,X^N_{t_n})-  (\MyZ^N_{t_1},\ldots,\MyZ^N_{t_n})\right\|_{\cR^n} >\eps \right)\longrightarrow 0,\qquad N\to\infty,
		\end{align*}
		and to apply Theorem~3.7~(a) of~\cite{zbMATH06444973}. By Markov's inequality, Lemma~\ref{lemma1} and Lemma~\ref{lemma2}, we have
		\begin{align*}
			&\PP\left(\left\|(X^N_{t_1},\ldots,X^N_{t_n})-  (\MyZ^N_{t_1},\ldots,\MyZ^N_{t_n})\right\|_{\cR^n} >\eps \right)\\
			&
			\leq\frac{1}{\eps^2}\E\left[ \left\|(X^N_{t_1},\ldots,X^N_{t_n})-  (\MyZ^N_{t_1},\ldots,\MyZ^N_{t_n})\right\|_{\cR^n}^2 \right]\\
			&
			\leq \frac{2}{\eps^2}\sum_{j=1}^n\left(\E\left[|X^N_{t_j}-\MyX^N_{t_j} |^2 \right]+ \E\left[|\MyX^N_{t_j}-\MyZ^N_{t_j} |^2 \right] \right)\\
			&
			\leq \frac{2n}{\eps^2}\left( \sup_{t\in[0,t_0]} \E\left[|X^N_{t}-\MyX^N_{t} |^2 \right]+ \sup_{t\in[0,t_0]} \E\left[|\MyX^N_{t}-\MyZ^N_{t} |^2 \right]\right)\longrightarrow0,
		\end{align*}
        { as $N\to \infty$.}
	\end{proof}

Let us now refine { the previous result to} the convergence of processes $\left((X^N_t)_{t\in[0,t_0]}\right)_{N\in\Nat}$ to the process $(Z_t)_{t\in[0,t_0]}$. To this aim, we need the following preparatory result:

\begin{lemma}\label{lem:tightness}
Fix $N_0$ sufficiently large such that  $\myA_N\geq M$ for every $N\geq N_0$. Then, there is a constant $C>0$ such that for every $0\leq \epsilon\leq 1$, $N\geq N_0$ and $0\leq s\leq t\leq t_0$
\begin{align*}
&\E\left[|\widetilde X^N_t-	\widetilde X^N_s|^2\right] \\
&
\leq   C \Biggl(v(|t-s|) + \Bigl(\frac{1}{m^*_N}\int\limits_{(0,\infty)}\left[ \frac{\gamma}{y} {\1}_{\{0<y\leq 1\}}+\frac{\gamma^\epsilon}{y^{2-\epsilon}}  {\1}_{\{y> 1\}}\right] \mu_N(dy) \Bigr) \;|t-s|^{1+\epsilon} \Biggr)
\end{align*}
\end{lemma}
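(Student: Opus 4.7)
My strategy is to decompose $\widetilde{X}^N_t-\widetilde{X}^N_s$ into two pieces, one responsible for the $v(|t-s|)$ bound and one for the $|t-s|^{1+\epsilon}$ bound, and bound each separately. Setting $\lambda:=\myA_N/M$, the Fubini computation carried out in the proof of Lemma~\ref{lemma2} gives $\widetilde{X}^N_t=\frac{1}{\myA_N}\sum_k\beta_{k,N}\int_0^t\myU^{k,N}_\rho(1-e^{-\lambda(t-\rho)})d\rho$. Subtracting the analogous formula for $\widetilde{X}^N_s$ and using the elementary identity $(1-e^{-\lambda(t-\rho)})-(1-e^{-\lambda(s-\rho)})=(1-e^{-\lambda(t-s)})e^{-\lambda(s-\rho)}$ for $\rho\leq s$, one obtains
\[
\widetilde{X}^N_t-\widetilde{X}^N_s=\frac{1}{\myA_N}\sum_{k=1}^N\beta_{k,N}(C_k+D_k),
\]
where $C_k:=\int_s^t\myU^{k,N}_\rho(1-e^{-\lambda(t-\rho)})d\rho$ and $D_k:=(1-e^{-\lambda(t-s)})\int_0^s\myU^{k,N}_\rho e^{-\lambda(s-\rho)}d\rho$. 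Since the OU processes $\myU^{k,N}$ are mutually independent conditionally on $\mathcal{M}$, the $k\neq k'$ cross terms vanish and $\E[(\widetilde{X}^N_t-\widetilde{X}^N_s)^2]\leq\frac{2}{\myA_N^2}\sum_k\beta_{k,N}^2(\E[C_k^2]+\E[D_k^2])$.

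The $C_k$-term will yield the $v(|t-s|)$ contribution. Using $0\leq 1-e^{-\lambda(t-\rho)}\leq 1$ together with the nonnegativity of the conditional covariance from Lemma~\ref{lemma:preliminary}, I obtain $\E[C_k^2|\mathcal{M}]\leq \int_s^t\int_s^t\frac{\sigma}{\gamma m_{k,N}}e^{-\gamma m_{k,N}|\rho-\tau|}d\rho\, d\tau$. Evaluating the double integral and taking expectation, together with $\ee_N(\tau)/m^*_N\leq \dot v(\tau)$ (from the monotone convergence in Assumption~\ref{asmp:m}), gives $\E[C_k^2]\leq\frac{2\sigma m^*_N}{\gamma^2}v(|t-s|)$. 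The scaling identity $m^*_N\sum_k\beta_{k,N}^2/\myA_N^2\to C_\delta C_\beta^2/C_\alpha^2$, which follows from Assumption~\ref{assmp:coef2} via $2(a-b)-\delta=1$, then gives exactly the $Cv(|t-s|)$ part of the target bound.

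For the $D_k$-term, the change of variables $u=s-\rho,\, v=s-\tau$ and bounding the resulting double integral by its extension to $[0,\infty)^2$, which evaluates to $\frac{1}{\lambda(\lambda+\gamma m)}$ via the substitution $s=u+v,\, d=u-v$, gives $\E[D_k^2]\leq\frac{\sigma(1-e^{-\lambda(t-s)})^2}{\gamma\lambda}\int\frac{\mu_N(dy)}{y(\lambda+\gamma y)}$. To reproduce the mixed kernel in the statement, I split the mass integral at $y=1$: on $\{y\leq 1\}$ I apply $\lambda+\gamma y\geq\lambda$ together with $(1-e^{-\lambda(t-s)})^2\leq\lambda^2(t-s)^2\leq\lambda^2 t_0^{1-\epsilon}(t-s)^{1+\epsilon}$, producing an integrand proportional to $\gamma/y$; on $\{y>1\}$ I use the weighted geometric-mean bound $\lambda+\gamma y\geq\max(\lambda,\gamma y)\geq\lambda^\epsilon(\gamma y)^{1-\epsilon}$ combined with $(1-e^{-\lambda(t-s)})^2\leq\lambda^{1+\epsilon}(t-s)^{1+\epsilon}$, the latter coming from $|1-e^{-x}|\leq\min(x,1)\leq x^{(1+\epsilon)/2}$. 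The two $\lambda$-powers cancel exactly, leaving an integrand proportional to $\gamma^\epsilon/y^{2-\epsilon}$. Summing over $k$ and multiplying/dividing by $m^*_N$ converts the raw integral $\int[\cdots]\mu_N(dy)$ into the quantity $\frac{1}{m^*_N}\int[\cdots]\mu_N(dy)$ appearing in the statement.

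The delicate point, and the main obstacle, is precisely the choice of the two interpolation exponents on $\{y>1\}$: the exponent in $\lambda+\gamma y\geq\lambda^\epsilon(\gamma y)^{1-\epsilon}$ must match the $\epsilon$ prescribed in the target kernel $\gamma^\epsilon/y^{2-\epsilon}$, and the complementary exponent in $(1-e^{-x})^2\leq x^{1+\epsilon}$ must then be tuned so that the residual factor of $\lambda$ vanishes. Using the naive estimate $\lambda+\gamma y\geq\gamma y$ on $\{y>1\}$ would leave an uncompensated factor $\lambda^\epsilon=(\myA_N/M)^\epsilon\to\infty$ and would ruin the uniform-in-$N$ character of the bound; the subtle Young-type interpolation is therefore essential.
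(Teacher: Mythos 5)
Your proof is correct and follows essentially the same route as the paper: the same decomposition into the integral over $[s,t]$ (bounded via $\ee_N(\tau)/m^*_N\leq \dot v(\tau)$ to give the $v(|t-s|)$ term) and the integral over $[0,s]$ weighted by the difference of exponentials, with the same key interpolation $\lambda+\gamma y\geq \lambda^{\epsilon}(\gamma y)^{1-\epsilon}$ on $\{y>1\}$ versus $\lambda+\gamma y\geq\lambda$ on $\{y\leq 1\}$ so that the powers of $\lambda=\myA_N/M$ cancel. Your packaging of the second piece — factoring out $(1-e^{-\lambda(t-s)})^2$ and evaluating the extended double integral in closed form as $1/(\lambda(\lambda+\gamma m))$ — is a mild streamlining of the paper's computation, which instead keeps the weight $g(t,s;\tau)$ inside the integrals and splits it as $g^{1-\epsilon}g^{\epsilon}$, but the substance is identical.
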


\begin{proof}
	Let $0\leq s\leq t \leq T$. We write (cp. the proof of Lemma \ref{lemma2})
	\begin{eqnarray*}
		&&	\widetilde X^N_t-	\widetilde X^N_s \\ &=&  \frac{1}{\myA_N} \sum_{k=1}^N\beta_{k,N}\int_s^t \myU^{k,N}_\tau \left( 1-e^{-\frac{\myA_N}{M}(t-\tau)} \right) d\tau \\ &&+ \frac{1}{\myA_N} \sum_{k=1}^N\beta_{k,N}\int_0^s \myU^{k,N}_\tau \left( e^{-\frac{\myA_N}{M}(s-\tau)}-e^{-\frac{\myA_N}{M}(t-\tau)} \right) d\tau  \\
		&=& (I)+(II)
	\end{eqnarray*}
	By standard calculations, making use of \eqref{eq:ExpCondOU}, 
	\begin{eqnarray}\label{eq:tight1}
		&& \E[|(I)|^2]\nonumber \\ &=& \left(\frac{2\sigma}{\gamma \myA_N^2} \sum_{k=1}^N\beta_{k,N}^2\right)\int_s^t \int_s^\tau \int_{(0,\infty)} \frac{e^{-\gamma y(\tau-\rho)}}{y}(1-e^{-\frac{\myA_N}{M}(t-\rho)})\nonumber \\ &&\quad \times(1-e^{-\frac{\myA_N}{M}(t-\tau)})  \mu_N(dy)d\rho d\tau	\nonumber \\ &\leq & \left(\frac{2\sigma}{\gamma \myA_N^2} \sum_{k=1}^N\beta_{k,N}^2\right)\int_s^t  \int_{(0,\infty)} \int_s^\tau \frac{e^{-\gamma y(\tau-\rho)}}{y}  d\rho\, \mu_N(dy) \, d\tau	\nonumber \\ &=& 
		\left(\frac{2\sigma}{\gamma^2 \myA_N^2} \sum_{k=1}^N\beta_{k,N}^2\right)\int_s^t  \int_{(0,\infty)}  (1-e^{-\gamma y(\tau-s)})y^{-2} \, \mu_N(dy) \, d\tau	\nonumber \\ &=& 
		\left(\frac{2\sigma}{\gamma^2 \myA_N^2} \sum_{k=1}^N\beta_{k,N}^2\right)\int_0^{t-s}  \int_{(0,\infty)}  (1-e^{-\gamma y\tau})y^{-2}  \, \mu_N(dy) \, d\tau	\nonumber \\
		&=& 	\left(\frac{2\sigma m^*_N}{\gamma^2 \myA_N^2}\sum_{k=1}^N\beta_{k,N}^2\right)\int_0^{t-s} 	\frac{\ee_N(\tau)}{m^*_N} \, d\tau \leq  	\left(\frac{2\sigma m^*_N}{\gamma^2 \myA_N^2}\sum_{k=1}^N\beta_{k,N}^2\right) v(|t-s|).\nonumber \\ \
	\end{eqnarray}
	For the estimate of $(II)$, we fix some $0\leq \epsilon\leq 1$ and assume that $\myA_N\geq M$. Moreover, we abbreviate 
	$$
	g(t,s;\tau)=e^{-\frac{\myA_N}{M}(s-\tau)}-e^{-\frac{\myA_N}{M}(t-\tau)}
	$$
	Then, 
	\begin{eqnarray*}
		&& \E[|(II)|^2]\nonumber \\ &=& \left(\frac{2\sigma}{\gamma \myA_N^2} \sum_{k=1}^N\beta_{k,N}^2\right)\int_0^s  \int_{(0,\infty)} \int_0^\tau \frac{e^{-\gamma y(\tau-\rho)}}{y}g(t,s;\rho) d\rho\, \mu_N(dy)\, g(t,s;\tau) \,d\tau	\end{eqnarray*}
Now,
\begin{eqnarray*}
&&	\int_0^\tau \frac{e^{-\gamma y(\tau-\rho)}}{y}g(t,s;\rho) d\rho
\\
&=&\frac{1}{y(\gamma y+\frac{\myA_N}{M})}\left(e^{-\frac{\myA_N}{M}(s-\tau)}-e^{-\frac{\myA_N}{M}(t-\tau)}+( e^{-\frac{\myA_N}{M}t}-e^{-\frac{\myA_N}{M}s}) e^{ -\gamma y \tau}  \right) 
\\ 
&\leq &
\frac{1}{y(\gamma y+\frac{\myA_N}{M})}\left(e^{-\frac{\myA_N}{M}(s-\tau)}-e^{-\frac{\myA_N}{M}(t-\tau)}  \right)^{1-\epsilon}
		\left(e^{-\frac{\myA_N}{M}(s-\tau)}-e^{-\frac{\myA_N}{M}(t-\tau)}  \right)^{\epsilon} 
\\ 
&\leq &
{ \frac{1}{y} \frac{1}{(\gamma y+\frac{\myA_N}{M})^\epsilon} \frac{1}{(\gamma y+\frac{\myA_N}{M})^{1-\epsilon}} }
		\left(e^{-\frac{\myA_N}{M}(s-\tau)}-e^{-\frac{\myA_N}{M}(t-\tau)}  \right)^{\epsilon} 
\\
&\leq& \left(\frac{M}{y\myA_N} {\1}_{\{0<y\leq 1\}}+\frac{M^\epsilon}{y^{2-\epsilon}\gamma^{1-\epsilon}\myA_N^\epsilon}  {\1}_{\{y> 1\}}  \right) \left(e^{-\frac{\myA_N}{M}(s-\tau)}-e^{-\frac{\myA_N}{M}(t-\tau)}  \right)^{\epsilon} 
\\
&\leq & 
\left(\frac{1}{y} {\1}_{\{0<y\leq 1\}}+\frac{1}{y^{2-\epsilon}\gamma^{1-\epsilon}}  {\1}_{\{y> 1\}}  \right) \left(\frac{M}{\myA_N}\left(e^{-\frac{\myA_N}{M}(s-\tau)}-e^{-\frac{\myA_N}{M}(t-\tau)} \right) \right)^{\epsilon} 
\\
&\leq & 
\left(\frac{1}{y} {\1}_{\{0<y\leq 1\}}+\frac{1}{y^{2-\epsilon}\gamma^{1-\epsilon}}  {\1}_{\{y> 1\}}  \right) |t-s|^\epsilon.
	\end{eqnarray*}
	Hence,
	\begin{eqnarray*}
		&& \E[|(II)|^2]\nonumber \\  &{ \leq}& \left(\frac{2\sigma}{\gamma \myA_N^2} \sum_{k=1}^N\beta_{k,N}^2\right) \left(\,\int\limits_{(0,\infty)} \left[\frac{1}{y} {\1}_{\{0<y\leq 1\}}+\frac{1}{y^{2-\epsilon}\gamma^{1-\epsilon}}  {\1}_{\{y> 1\}} \right]\mu_N(dy) \right) \nonumber \\ && \times \left(\int_0^s g(t,s,\tau) d\tau \right) |t-s|^\epsilon  
	\end{eqnarray*}
	Noting that 
	\begin{eqnarray*}
		\int_0^s g(t,s,\tau) d\tau &\leq & \frac{M}{\myA_N}	\left(1- e^{-\frac{\myA_N}{M}(t-s)}\right)\leq |t-s|,
	\end{eqnarray*}
	we finally obtain

\begin{align*}
& \E[|(II)|^2]\nonumber 
 \leq\left(\frac{2\sigma m^*_N}{\gamma \myA_N^2} \sum_{k=1}^N\beta_{k,N}^2\right)  \\ &\quad \times\left(\frac{1}{m^*_N}\,\int\limits_{(0,\infty)}\left[ \frac{1}{y} {\1}_{\{0<y\leq 1\}}+\frac{1}{y^{2-\epsilon}\gamma^{1-\epsilon}}  {\1}_{\{y> 1\}} \right]\mu_N(dy) \right) |t-s|^{1+\epsilon}.  \nonumber   
\end{align*}
Noting that the factor $m^*_N \myA_N^{-2} \sum_{k=1}^N\beta_{k,N}^2$ is bounded in $N$ by Assumption~\ref{assmp:coef2}, the proof is finished.
\end{proof}

\begin{lemma}
Let $\myX$ be the space of continuous functions $ C[0,t_0]$ endowed with its Borel $\sigma$-field $\BB(\myX)$. In the setting of Theorem~\ref{Thm}~(ii),  the sequence of processes $\left((X^N_t)_{t\in[0,t_0]}\right)_{N\in\Nat}$, considered as $(\myX,\BB(\myX))$-valued random elements, converges $\sigma(\M)$-mixing   in  distribution to the process $(Z_t)_{t\in[0,t_0]}$.
	\end{lemma}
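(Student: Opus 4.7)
The plan is to upgrade the finite-dimensional $\sigma(\M)$-mixing convergence established in Lemma~\ref{lem:finDimDistrConv} to $\sigma(\M)$-mixing convergence on the Polish space $C[0,t_0]$. Since $\sigma(\M)$-mixing is a special case of $\sigma(\M)$-stable convergence (Remark~\ref{rem:mixing}), and stable convergence on a Polish space is characterized by stable convergence of the finite-dimensional distributions together with tightness on path space (a Prokhorov-type theorem for stable convergence, see~\cite{zbMATH06444973}), the only remaining task is to verify tightness of $(X^N)_{N \in \Nat}$ in $C[0,t_0]$.

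By Lemma~\ref{lemma1}, $\sup_{t \in [0, t_0]} |X^N_t - \widetilde X^N_t| \to 0$ in $L^2$ (hence in probability) as $N \to \infty$, so tightness of $(\widetilde X^N)$ transfers to $(X^N)$. Noting that $\widetilde X^N_0 = 0$ is trivially tight, by the Kolmogorov--Chentsov criterion it suffices to exhibit constants $p, q, C > 0$ independent of $N$ such that
\begin{equation*}
\E\!\left[|\widetilde X^N_t - \widetilde X^N_s|^p\right] \leq C\,|t-s|^{1+q}, \qquad s, t \in [0, t_0].
\end{equation*}

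Under Assumption~\ref{assmp:m extra}(i), this is immediate from Lemma~\ref{lem:tightness} with $p = 2$ and $q = \min\{\eps, \epsilon\} > 0$, using $v(r) \leq Cr^{1+\eps}$ combined with the uniform integral bound~\eqref{eq:assmp:muN}. Under Assumption~\ref{assmp:m extra}(ii), the masses are deterministic, which makes $\widetilde X^N$ an unconditionally Gaussian process; the moment identity $\E[|G|^{2p}] = c_p(\E[G^2])^p$ valid for centred Gaussian $G$ then lets me upgrade any uniform $L^2$-bound to an $L^4$-bound. Specifically, once $\E[|\widetilde X^N_t - \widetilde X^N_s|^2] \leq C|t-s|$ is established, one obtains $\E[|\widetilde X^N_t - \widetilde X^N_s|^4] \leq C|t-s|^2$, satisfying Kolmogorov--Chentsov with $p = 4$ and $q = 1$.

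The main obstacle is securing the uniform $L^2$-increment bound in Case (ii), because the prefactor $\frac{1}{m^*_N}\int(\ldots)\,\mu_N(dy)$ appearing in Lemma~\ref{lem:tightness} may blow up with $N$ when $\mu_N$ is a Dirac mass at a diverging point (cf.\ Example~\ref{Example:BM}(i)). To circumvent this I would refine the proof of Lemma~\ref{lem:tightness} in the deterministic setting via the decomposition $\widetilde X^N_t = \widetilde Z^N_t - \mathcal{R}^N_t$ from the proof of Lemma~\ref{lemma2}: the dominant part $\widetilde Z^N$ satisfies $\E[|\widetilde Z^N_t - \widetilde Z^N_s|^2] \leq Cv(|t-s|) \leq C|t-s|$ with $C$ independent of $N$ (since $\ee_N(\tau)/m^*_N \uparrow \dot v(\tau)$ and $v(r) \leq Cr$ by Assumption~\ref{assmp:m extra}(ii)), while the residual $\mathcal{R}^N$ contributes at most a term of the same order by a direct computation exploiting the exponentially decaying weight $e^{-\myA_N(\cdot)/M}$ in its definition. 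Together with Gaussianity, this delivers the required $L^4$-modulus and tightness follows.
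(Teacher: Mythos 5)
Your overall strategy coincides with the paper's: establish tightness of $(\widetilde X^N)$ in $C[0,t_0]$ via a Kolmogorov-type moment criterion (the paper cites Corollary~14.9 of~\cite{Kallenberg}), combine it with the finite-dimensional $\sigma(\M)$-mixing convergence from Lemma~\ref{lem:finDimDistrConv} to get mixing convergence on path space (the paper invokes Proposition~3.9 of~\cite{zbMATH06444973} for exactly this upgrade), and then transfer from $\widetilde X^N$ to $X^N$ using $\sup_{t\in[0,t_0]}|X^N_t-\widetilde X^N_t|\to 0$ in probability together with Theorem~3.7~(a) of~\cite{zbMATH06444973}. Your treatment of Case~(i), and your Gaussian fourth-moment upgrade in Case~(ii), are identical to the paper's.

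The one place you deviate is the $L^2$-increment bound in Case~(ii), and there your premise is mistaken. Lemma~\ref{lem:tightness} is stated for every $0\le\epsilon\le 1$, \emph{including} $\epsilon=0$, and $\epsilon=0$ is precisely the choice the paper makes in Case~(ii). With $\epsilon=0$ the prefactor $\frac{1}{m^*_N}\int_{(0,\infty)}\bigl[\frac{\gamma}{y}\1_{\{0<y\le 1\}}+\frac{1}{y^{2}}\1_{\{y>1\}}\bigr]\mu_N(dy)$ does not blow up: in Example~\ref{Example:BM}~(i), with $\mu_N=\delta_{N^{\delta/2}}$ and $m^*_N=N^{-\delta}$, it equals $N^{\delta}\cdot N^{-\delta}=1$ up to a constant (the divergence you describe, of order $N^{\delta\epsilon/2}$, occurs only for $\epsilon>0$). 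More generally, in the deterministic setting the monotone bound $\ee_N(t)/m^*_N\le\dot v(t)\le\dot v(t_0)<\infty$ from Assumption~\ref{asmp:m} forces $\frac{1}{m^*_N m_N^2}\lesssim 1$ when the Dirac point $m_N$ exceeds $1$ (use $1-e^{-\gamma m_N t_0}\ge 1-e^{-\gamma t_0}$) and $\frac{\gamma}{m^*_N m_N}\lesssim 1$ when $m_N\le 1$ (use $1-e^{-x}\ge(1-e^{-1})x$ for $x\le1$), so the prefactor is uniformly bounded and Lemma~\ref{lem:tightness} delivers $\E[|\widetilde X^N_t-\widetilde X^N_s|^2]\le C|t-s|$ directly. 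Your proposed workaround via $\widetilde X^N=\widetilde Z^N-\mathcal{R}^N$ does not actually sidestep the difficulty you perceive: the increment of $\mathcal{R}^N$ is, up to regrouping, exactly the term $(II)$ estimated in the proof of Lemma~\ref{lem:tightness}, and your claim that it ``contributes at most a term of the same order by a direct computation'' asserts, without carrying it out, the very estimate you are doubting. As written, Case~(ii) is therefore incomplete; the repair is simply to apply Lemma~\ref{lem:tightness} with $\epsilon=0$, as the paper does.
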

	
	\begin{proof}
Note that each process $ (X^N_t)_{t\in[0,t_0]}$, ${N\in\Nat}$, as well as each process $ (\MyX^N_t)_{t\in[0,t_0]}$, ${N\in\Nat}$, has continuous paths as (a part of) a solution of  a multidimensional linear in a narrow sense stochastic differential equation driven by a (multidimensional) Wiener process. Further, let us first show that the sequence of processes $\left((\MyX^N_t)_{t\in[0,t_0]}\right)_{N\in\Nat}$  is tight in $\myX$. By Corollary~14.9 of~\cite{Kallenberg}, it is enough to check that $(\MyX^N_0)_{N\in\Nat}$ is tight in $\cR$ and $\E\left[|\MyX^N_t-\MyX^N_s|^q \right]\leq C|t-s|^{p}$ for some $p>1$, $q,C>0$ not depending on $N$. 		Recall that $\MyX^N_0=0$ for all $N\in\Nat$,  hence $(\MyX^N_0)_{N\in\Nat}$  is  tight. 

Consider  the setting of Assumption~\ref{assmp:m extra}~(i). We have by Lemma~\ref{lem:tightness} (with $\eps$, $\epsilon$ and $N_0$ as in Assumption~\ref{assmp:m extra}~(i)) 
\begin{align*}
&\E\left[|\widetilde X^N_t-	\widetilde X^N_s|^2\right] \leq   C |t-s|^{1+\epsilon\wedge\eps},\qquad t,s\in[0,t_0],\quad N> N_0.
\end{align*}
Hence, the family 	 $\left((\MyX^N_t)_{t\in[0,t_0]}\right)_{N\in\Nat}$ is tight in $\myX$.	

In the setting of Assumption~\ref{assmp:m extra}~(ii), we have by Lemma~\ref{lem:tightness} with $\epsilon:=0$
\begin{align*}
&\E\left[|\widetilde X^N_t-	\widetilde X^N_s|^2\right] \leq   C |t-s|,\qquad t,s\in[0,t_0],\quad N> N_0.
\end{align*}		
Since, in the setting of Assumption~\ref{assmp:m extra}~(ii),  $\widetilde X^N_t-	\widetilde X^N_s$ is Gaussian (and not only conditionally Gaussian as in the case of random masses $m_{k,N}$), we get,
$$
\E\left[|\widetilde X^N_t-	\widetilde X^N_s|^4\right]\leq 3 C^2 |t-s|^2.
$$ 
Therefore, the family 	 $\left((\MyX^N_t)_{t\in[0,t_0]}\right)_{N\in\Nat}$ is again tight in $\myX$.

Further, it follows from the proof of Lemma~\ref{lem:finDimDistrConv}  that the sequence of stochastic processes $\left((\MyX^N_t)_{t\in[0,t_0]}\right)_{N\in\Nat}$  converges $\sigma(\M)$-mixing   in finite dimensional distributions to the process $(Z_t)_{t\in[0,t_0]}$. Therefore, the processes $\left((\MyX^N_t)_{t\in[0,t_0]}\right)_{N\in\Nat}$  converge    to the process $(Z_t)_{t\in[0,t_0]}$ $\sigma(\M)$-mixing  in  distribution by  Proposition~3.9 of~\cite{zbMATH06444973}. 

Moreover, we have for any $\eps>0$ by Markov's inequality
\begin{align*}
&\PP\left(\left\| X^N-\MyX^N   \right\|_{C([0,t_0])} >\eps \right)\leq \frac{1}{\eps^2}\E\left[\left\| X^N-\MyX^N   \right\|^2_{C([0,t_0])}  \right]\\
&
=\frac{1}{\eps^2}\E\left[\sup_{t\in[0,t_0]}\left| X^N_t-\MyX^N_t   \right|^2  \right]\longrightarrow 0,\qquad N\to\infty,
\end{align*}
by Lemma~\ref{lemma1}. Therefore, 	the sequence of processes $\left((X^N_t)_{t\in[0,t_0]}\right)_{N\in\Nat}$ converges $\sigma(\M)$-mixing   in  distribution to the process $(Z_t)_{t\in[0,t_0]}$ by  Theorem~3.7~(a) of~\cite{zbMATH06444973}. In particular, the sequence of processes $\left((X^N_t)_{t\in[0,t_0]}\right)_{N\in\Nat}$ converges  to the process $(Z_t)_{t\in[0,t_0]}$   in  distribution (cp. Remark~\ref{rem:mixing}).
\end{proof}

\section{Acknowledgments}
Gianni Pagnini acknowledges the support by the Basque Government through the BERC 2022--2025 
program and by the Ministry of Science and Innovation: BCAM Severo Ochoa accreditation\\ CEX2021-001142-S / MICIN / AEI / 10.13039/501100011033.	

Yana Kinderknecht acknowledges the  Basque Center of Applied Mathematics (BCAM) and Kassel University for the financial support of  her regular research visits to BCAM.
	

\end{document}